\newtheorem{theorem}{Theorem}[section]
\newtheorem{corollary}{Corollary}
\newtheorem{proposition}{Proposition}
\theoremstyle{definition}
\newtheorem{definition}[theorem]{Definition}
\newtheorem{remark}{Remark}
\newcommand{\RR}{\mathbb R}
\newcommand{\NN}{\mathbb N}
\title[Nonlinear Robin problems] 
      {Robin problems with indefinite linear part and competition phenomena}
\author[N.S. Papageorgiou, V.D. R\u adulescu and D.D. Repov\v s]{}
\subjclass[2010]{Primary: 35J20, 35J60; Secondary: 35J92.}
 \keywords{Indefinite potential, Robin boundary condition, strong maximum principle, truncation, competing nonlinear, positive solutions, regularity theory, minimal positive solution.}
\begin{document}
\maketitle

\centerline{\scshape Nikolaos S. Papageorgiou}
\medskip
{\footnotesize
 \centerline{Department of Mathematics, National Technical University}
   \centerline{Zografou Campus, Athens 15780, Greece}
} 

\medskip

\centerline{\scshape Vicen\c tiu D. R\u adulescu}
\medskip
{\footnotesize
 \centerline{ Department of Mathematics, Faculty of Sciences,}
   \centerline{King Abdulaziz University, P.O.
Box 80203, Jeddah 21589, Saudi Arabia} 
}

\medskip
\centerline{\scshape Du\v san D. Repov\v s}
\medskip
{\footnotesize
 \centerline{Faculty of Education and Faculty of Mathematics and Physics,}
   \centerline{University of Ljubljana, Kardeljeva plo\v{s}\v{c}ad 16, SI-1000 Ljubljana, Slovenia}
}

\bigskip

\begin{abstract}
We consider a parametric semilinear Robin problem driven by the Laplacian plus an indefinite potential. The reaction term involves competing nonlinearities. More precisely, it is the sum of a parametric sublinear (concave) term and a superlinear (convex) term. The superlinearity is not expressed via the Ambrosetti-Rabinowitz condition. Instead, a more general hypothesis is used. We prove a bifurcation-type theorem describing the set of positive solutions as the parameter $\lambda > 0$ varies. We also show the existence of a minimal positive solution $\tilde{u}_\lambda$ and determine the monotonicity and continuity properties of the map $\lambda \mapsto \tilde{u}_\lambda$.
\end{abstract}

\section{Introduction}

Let $\Omega \subseteq \RR^N $ ($N\geq 2$) be a bounded domain with a $C^2$-boundary $\partial\Omega$. In this paper we study the following parametric Robin problem
\begin{equation}\tag{$P_\lambda$}\label{eqp}
	\left\{\begin{array}{ll}
	-\Delta u(z)+\xi(z)u(z)=\lambda g(z,u(z))+f(z,u(z))\ \mbox{in}\ \Omega \\
\displaystyle	\frac{\partial u}{\partial n}+\beta(z)u=0\ \mbox{on}\ \partial \Omega.
	\end{array}\right\}
\end{equation}

In this problem, $\lambda > 0$ is a parameter, $\xi \in L^s (\Omega)$ ($s>N$) is a potential function which is indefinite (that is, sign changing) and in the reaction, $g(z,x)$ and $f(z,x)$ are Carath\'eodory functions (that is, for all $x\in\RR$, $z\mapsto g(z,x),f(z,x)$ are measurable and for almost all $z\in \Omega$, $x\mapsto g(z,x),f(z,x)$ are continuous). We assume that for almost all $z\in \Omega$, $g(z,\cdot)$ is strictly sublinear near $+\infty$ (concave nonlinearity), while for almost all $z\in \Omega$, $f(z,\cdot)$ is strictly superlinear near $+\infty$ (convex nonlinearity). Therefore the reaction in problem (\ref{eqp}) exhibits the combined effects of competing nonlinearities (``concave-convex problem"). The study of such problems was initiated with the well-known work of Ambrosetti, Brezis and Cerami \cite{2}, who dealt with a Dirichlet problem with zero potential (that is, $\xi \equiv 0$) and the reaction had the form
$$\lambda x^{q-1} + x^{r-1}\ \mbox{for all}\ x\geq 0\ \mbox{with}\ 1<q<2<r<2^*.$$

They proved a bifurcation-type result for small values of the parameter $\lambda > 0$. The work of Ambrosetti, Brezis and Cerami \cite{2} was extended to more general classes of Dirichlet problems with zero potential by Bartsch and Willem \cite{4}, Li, Wu and Zhou \cite{8}, and R\u adulescu and Repov\v s \cite{15}.

Our aim in this paper is to extend all the aforementioned results to the more general problem (\ref{eqp}). Note that when $\beta \equiv 0$, we recover the Neumann problem with an indefinite potential. Robin and Neumann problems are in principle more difficult to deal with, due to the failure of the Poincar\'e inequality. Therefore in our problem, the differential operator (left-hand side of the equation) is not coercive (unless $\xi \geq 0$, $\xi \not\equiv 0$). Recently we have examined Robin and Neumann problems with indefinite linear part. We mention the works of Papageorgiou and R\u adulescu \cite{12, 13,14}. In \cite{12} the problem is parametric with competing nonlinearities. The concave term is $-\lambda |x|^{q-2}x$, $1<q<2$, $x\in \RR$ (so it enters into the equation with a negative sign) while the perturbation $f(z,x)$ is Carath\'eodory, asymptotically linear near $\pm \infty$ and resonant with respect to the principal eigenvalue. We proved a multiplicity result for all small values of the parameter $\lambda > 0$, producing five nontrivial smooth solutions, four of which have constant sign (two positive and two negative).

In this paper, using variational tools together with truncation, perturbation and comparison techniques, we prove a bifurcation-type theorem, describing the existence and multiplicity of positive solutions as the parameter $\lambda > 0$ varies. We also establish the existence of a minimal positive solution $\tilde{u}_\lambda$ and determine the monotonicity and continuity properties of the map $\lambda \mapsto \tilde{u}_\lambda$.

\section{Preliminaries}

Let $X$ be a Banach space and $X^*$  its topological dual. By $\left\langle \cdot,\cdot\right\rangle$ we denote the duality brackets for the dual pair $(X^*,X)$. Given $\varphi\in C^1(X,\RR)$, we say that $\varphi$ satisfies the ``Cerami condition" (the ``C-condition" for short), if the following property is satisfied:
\begin{center}
``Every sequence $\{u_n\}_{n\geq 1}\subseteq X$ such that $\{\varphi(u_n)\}_{n\geq 1}\subseteq\RR$ is bounded and
$$(1+||u_n||)\varphi'(u_n)\rightarrow 0\ \mbox{in}\ X^*\ \mbox{as}\ n\rightarrow\infty,$$
admits a strongly convergent subsequence''.
\end{center}

This is a compactness-type condition on the functional $\varphi(\cdot)$. It leads to a deformation theorem from which one can derive the minimax theory for the critical values of $\varphi$ (see, for example, Gasinski and Papageorgiou \cite{6}). The following notion is central to this theory.

\begin{definition}\label{def1}
	Let $Y$ be a Hausdorff topological space and $E_0 , E,D \subseteq Y$  nonempty, closed sets such that $E_0 \subseteq E$. We say that the pair $\{E_0 , E\}$ is linking with $D$ in $Y$ if:
	\begin{itemize}
		\item[(a)] $E_0 \cap D = \emptyset $;
		\item[(b)] For any $\gamma \in C(E,Y)$ such that $\gamma \left|_{E_0} = {\rm id} \right|_{E_0}$, we have $\gamma(E)\cap D \neq \emptyset$.
	\end{itemize}
\end{definition}

	Using this topological notion, one can prove the following general minimax principle, known in the literature as the ``linking theorem" (see, for example, Gasinski and Papageorgiou \cite[p. 644]{6}).
\begin{theorem}\label{th2}
	Assume that $X$ is a Banach space, $E_0, E, D \subseteq X$ are nonempty, closed subsets such that $\{E_0, E\}$ is linking with $D$ in $X$, $\varphi \in C^1 (X,\RR)$ satisfies the $C$-condition
	$$\sup\limits_{E_0}\varphi < \inf\limits_{D} \varphi$$
	and $c=\inf\limits_{\gamma\in\Gamma}\sup\limits_{u\in E}\varphi(\gamma(u))$, where $\Gamma=\left\{\gamma \in C(E,X):\gamma\left|_{E_0}={\rm id}\right|_{E_0}\right\}$. Then $c \geq\inf\limits_{D}\varphi$ and $c$ is a critical value of $\varphi$ (that is, there exists $u\in X$ such that $\varphi' (u)=0,\ \varphi(u)=c$).
\end{theorem}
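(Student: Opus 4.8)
The plan is to deduce this from the deformation theorem valid under the Cerami condition, via the standard minimax argument.

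First I would dispose of the inequality $c \ge \inf_D \varphi$. Let $\gamma \in \Gamma$ be arbitrary. Since $\gamma \in C(E,X)$ with $\gamma|_{E_0} = \mathrm{id}|_{E_0}$, the linking property (part (b) of Definition \ref{def1}) yields $\gamma(E) \cap D \ne \emptyset$, so there is $u \in E$ with $\gamma(u) \in D$, whence $\sup_{v \in E}\varphi(\gamma(v)) \ge \varphi(\gamma(u)) \ge \inf_D\varphi$. Taking the infimum over $\gamma \in \Gamma$ gives $c \ge \inf_D\varphi$. In particular $\delta := c - \sup_{E_0}\varphi \ge \inf_D\varphi - \sup_{E_0}\varphi > 0$; also, testing with the inclusion $E \hookrightarrow X$ (which lies in $\Gamma$) shows $c \le \sup_E\varphi$, so $c$ is a real number.

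For the statement that $c$ is a critical value I would argue by contradiction, assuming $K_c := \{u \in X : \varphi'(u) = 0,\ \varphi(u) = c\} = \emptyset$. Because $\varphi$ satisfies the $C$-condition and $K_c = \emptyset$, the deformation theorem furnishes, for the choice $\bar\varepsilon := \delta/2 > 0$, a number $\varepsilon \in (0,\bar\varepsilon)$ and a continuous map $\eta : [0,1]\times X \to X$ with: (i) $\eta(0,\cdot) = \mathrm{id}$; (ii) $\eta(t,u) = u$ whenever $\varphi(u) \notin [c-\bar\varepsilon,\, c+\bar\varepsilon]$; and (iii) $\eta(1, \varphi^{c+\varepsilon}) \subseteq \varphi^{c-\varepsilon}$, where $\varphi^a := \{u \in X : \varphi(u) \le a\}$. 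By definition of $c$ as an infimum, pick $\gamma \in \Gamma$ with $\sup_{v\in E}\varphi(\gamma(v)) < c+\varepsilon$, i.e.\ $\gamma(E) \subseteq \varphi^{c+\varepsilon}$, and set $\gamma_* := \eta(1,\gamma(\cdot)) \in C(E,X)$. For $u \in E_0$ one has $\gamma(u) = u$ and $\varphi(\gamma(u)) = \varphi(u) \le \sup_{E_0}\varphi = c - \delta < c - \bar\varepsilon$, so by (ii) $\gamma_*(u) = \eta(1, u) = u$; hence $\gamma_* \in \Gamma$. But (iii) gives $\gamma_*(E) = \eta(1,\gamma(E)) \subseteq \eta(1,\varphi^{c+\varepsilon}) \subseteq \varphi^{c-\varepsilon}$, so $\sup_{v\in E}\varphi(\gamma_*(v)) \le c-\varepsilon < c$, which contradicts $c = \inf_{\gamma\in\Gamma}\sup_{v\in E}\varphi(\gamma(v))$. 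Therefore $K_c \ne \emptyset$, i.e.\ $c$ is a critical value of $\varphi$.

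The genuinely nontrivial input is the deformation theorem itself --- in particular the localization property (ii), which is what allows the ``surgery'' $\gamma \mapsto \eta(1,\gamma(\cdot))$ to preserve the constraint $\gamma|_{E_0} = \mathrm{id}|_{E_0}$ --- and its construction (a pseudo-gradient flow adapted to the $(1+\|u\|)$-weight built into the $C$-condition) is the delicate part; however, it is classical and is recorded in the reference cited in Section 2, so I would simply invoke it. Everything else is the short formal manipulation above, and the linking hypothesis enters only to guarantee the strict gap $\sup_{E_0}\varphi < \inf_D\varphi \le c$ that sets the argument in motion.
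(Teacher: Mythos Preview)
Your argument is correct and is precisely the classical proof of the linking theorem: the inequality $c\ge\inf_D\varphi$ follows directly from the linking property, and the fact that $c$ is a critical value is obtained by contradiction via the deformation theorem (valid under the $C$-condition), using the localization property to ensure the deformed path stays in $\Gamma$.

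However, there is nothing to compare against: the paper does \emph{not} prove Theorem~\ref{th2}. It is stated as a known result and attributed to Gasinski and Papageorgiou \cite[p.~644]{6}. So your proposal supplies a proof where the paper simply cites one; the approach you give is the standard one and is exactly what one finds in the cited reference.
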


With a suitable choice of the linking sets, we can produce as corollaries of Theorem \ref{th2}, the main minimax theorems of the critical point theory. For future use, we recall the so-called ``mountain pass theorem".
\begin{theorem}\label{th3}
	Assume that $X$ is a Banach space, $\varphi\in C^1 (X,\RR)$ satisfies the $C$-condition, $u_0,\ u_1\in X,\ ||u_1-u_0||>r>0$,
	$$\max\left\{\varphi(u_0),\varphi(u_1)\right\}<\inf \left[ \varphi(u):||u-u_0||=\rho \right]=m_\rho$$
	and $c=\inf\limits_{\gamma\in\Gamma}\max\limits_{0\leq t\leq 1}\varphi ( \gamma(t))$ with $\Gamma = \left\{\gamma \in C ([0,1],X):\gamma(0)=u_0, \gamma(1)=u_1\right\}$. Then $c\geq m_\rho$ and $c$ is a critical value of $\varphi$.
\end{theorem}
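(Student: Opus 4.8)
\textit{Proof proposal.} The plan is to obtain Theorem \ref{th3} as a special case of the linking theorem (Theorem \ref{th2}). Since $\|u_1-u_0\|>\rho>0$, the points $u_0,u_1$ are distinct, so the segment joining them is a homeomorphic copy of $[0,1]$ under the map $\sigma:[0,1]\to X$, $\sigma(t)=(1-t)u_0+tu_1$. I would apply Theorem \ref{th2} with
\[
E_0=\{u_0,u_1\},\qquad E=\sigma([0,1]),\qquad D=\{u\in X:\|u-u_0\|=\rho\},
\]
which are nonempty closed subsets of $X$ with $E_0\subseteq E$, and with the same functional $\varphi$, so that the $C$-condition is inherited verbatim.

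First I would verify that $\{E_0,E\}$ is linking with $D$ in the sense of Definition \ref{def1}. Condition (a) holds because $\|u_0-u_0\|=0\ne\rho$ and $\|u_1-u_0\|>\rho$, whence $E_0\cap D=\emptyset$. For condition (b), fix $\gamma\in C(E,X)$ with $\gamma|_{E_0}={\rm id}|_{E_0}$ and set $h(t)=\|\gamma(\sigma(t))-u_0\|$, a continuous function on $[0,1]$ with $h(0)=0$ and $h(1)=\|u_1-u_0\|>\rho$; by the intermediate value theorem there is $t_*\in(0,1)$ with $h(t_*)=\rho$, i.e. $\gamma(\sigma(t_*))\in D$, so $\gamma(E)\cap D\ne\emptyset$. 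Moreover, the standing hypothesis $\max\{\varphi(u_0),\varphi(u_1)\}<m_\rho$ says precisely that $\sup_{E_0}\varphi<\inf_D\varphi$, so all the hypotheses of Theorem \ref{th2} are met.

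It then remains to match the two minimax levels. Composition with $\sigma$ yields a bijection between $\Gamma_{\rm link}=\{\gamma\in C(E,X):\gamma|_{E_0}={\rm id}|_{E_0}\}$ and the path class $\Gamma=\{\eta\in C([0,1],X):\eta(0)=u_0,\ \eta(1)=u_1\}$ of Theorem \ref{th3}: indeed $\gamma\mapsto\gamma\circ\sigma$ maps $\Gamma_{\rm link}$ into $\Gamma$ (using $\gamma(u_0)=u_0$, $\gamma(u_1)=u_1$), with inverse $\eta\mapsto\eta\circ\sigma^{-1}$ since $\sigma$ is a homeomorphism onto $E$; and along this bijection $\sup_{u\in E}\varphi(\gamma(u))=\max_{0\le t\le1}\varphi((\gamma\circ\sigma)(t))$, the supremum being attained because $[0,1]$ is compact and $\varphi$ is continuous. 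Hence $c=\inf_{\gamma\in\Gamma}\max_{0\le t\le1}\varphi(\gamma(t))$ equals the minimax value produced by Theorem \ref{th2}, and Theorem \ref{th2} then delivers $c\ge\inf_D\varphi=m_\rho$ together with some $u\in X$ satisfying $\varphi'(u)=0$ and $\varphi(u)=c$, which is the assertion. The proof is essentially a reduction; the only steps demanding care are the linking property (b), which rests on the intermediate-value argument along the deformed segment, and the bijective matching of the two classes of admissible paths so that the two minimax levels coincide — no ingredient beyond Theorem \ref{th2} is needed.
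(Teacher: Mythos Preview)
Your proposal is correct and follows exactly the route indicated in the paper: the paper does not give a standalone proof of Theorem~\ref{th3} but notes in the Remark immediately following it that the result is deduced from Theorem~\ref{th2} with the choices $E_0=\{u_0,u_1\}$, $E=\{(1-t)u_0+tu_1:0\le t\le1\}$, $D=\partial B_\rho(u_0)$. You have simply supplied the details (the intermediate-value linking verification and the bijection between path classes), so nothing needs to change.
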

\begin{remark}
Theorem \ref{th3} can be deduced from Theorem \ref{th2} if we have
$E_0 = \left\{ u_0,u_1\right\}$, $E= \left\{ u=(1-t)u_0 + tu_1,\ 0\leq t \leq 1 \right\}$, $D=\partial B_\rho (u_0)=\{ u\in X:||u-u_0||$ $=\rho \}.$	
\end{remark}

In the analysis of problem (\ref{eqp}), we will use the following spaces: the Sobolev space $H^1(\Omega)$, the Banach space $C^1(\overline{\Omega})$ and the boundary Lebesgue spaces $L^r(\partial\Omega)$, $1\leq r \leq \infty$.

By $||\cdot||$ we denote the norm of the Sobolev space $H^1(\Omega)$. So
$$||u||=\left[ ||u||^2_2 + ||Du||^2_2 \right]^{\frac{1}{2}}\ \mbox{for all}\ u\in H^1(\Omega).$$

The  space $C^1(\overline{\Omega})$ is an ordered Banach space with positive cone
$$C_+ = \left\{ u\in C^1(\overline{\Omega}):\ u(z)\geq 0\ \mbox{for all}\ z\in \overline{\Omega} \right\}.$$

We will use the open set $D_+ \subseteq C_+$ defined by
$$D_+=\left\{u\in C_+ :u(z)>0\ \mbox{for all}\ z\in \overline{\Omega}\right\}.$$

On $\partial\Omega$ we consider the $(N-1)$-dimensional Hausdorff (surface) measure $\sigma(\cdot).$

Using this measure, we can define the Lebesgue spaces $L^r(\partial\Omega)$ ($1\leq r\leq \infty$) in the usual way. Recall that the theory of Sobolev spaces says that there exists a unique continuous linear map $\gamma_0 : H^1 (\Omega)\rightarrow L^2 (\partial\Omega)$, known as the ``trace map", such that
$$\gamma_0 (u)=u|_{\partial\Omega}\ \mbox{for all}\ u\in H^1(\Omega)\cap C(\overline{\Omega}).$$

This map is not surjective and it is compact into $L^q(\partial\Omega)$ if $1\leq q < \frac{2(N-1)}{N-2}\ if\ N \geq 3$ and into $L^q(\partial\Omega)\ \mbox{for all}\ q\geq 1\ \mbox{if}\ N=1,2.$

In what follows, for the sake of notational simplicity, we drop the use of the map $\gamma_0$. All restrictions of Sobolev functions on $\partial\Omega$ are understood in the sense of traces.

Let $f_0:\Omega \times \RR \rightarrow \RR$ be a Carath\'eodory function such that
$$\left| f_0(z,x) \right|\leq a_0 (z)(1+|x|^{r-1})\ \mbox{for almost all}\ z\in\Omega\ \mbox{and all}\ x\in\RR,$$
with $a_0\in L^\infty(\Omega), 1<r<2^*=\left\{\begin{array}{ll}
		\frac{2N}{N-2}&\mbox{if}\ N\geq 3\\
		+\infty&\mbox{if}\ N=1,2.
	\end{array}\right.  $

We set $F_0(z,x)=\int^x_0 f_0(z,s)ds$. Also, let $\xi\in L^s(\Omega)$ $(s>N)$ and $\beta\in W^{1,\infty}(\partial\Omega)$ with $\beta(z)\geq 0$ on $\partial\Omega$. We consider the $C'$-functional $\varphi_0 : H^1(\Omega)\rightarrow \RR$ defined by
$$\varphi_0(u)=\frac{1}{2}\vartheta(u)-\int_\Omega F_0(z,u)dz,$$
where
$$\vartheta(u)=||Du||^2_2+\int_\Omega \xi(z)u^2 dz+\int_{\partial\Omega}\beta(z)u^2 d\sigma\ \mbox{for all}\ u\in H^1(\Omega).$$

The next result follows from Papageorgiou and R\u adulescu \cite[Proposition 3]{11} using the regularity theory of Wang \cite{16}.
\begin{proposition}\label{prop4}
	Let $u_0 \in H^1(\Omega)$ be a local $C^1(\overline{\Omega})$-minimizer of $\varphi_0$, that is, there exists $\rho_0 > 0$ such that
	$$\varphi_0 (u_0)\leq \varphi_0 (u_0+h)\ \mbox{for all}\ h\in C^1(\overline{\Omega})\ \mbox{with}\ ||h||_{C^1(\overline{\Omega})}\leq \rho_0.$$
	Then $u_0\in C^{1,\alpha}(\overline{\Omega})$ with $\alpha \in (0,1)$ and $u_0$ is also a local $H^1(\Omega)$-minimizer of $\varphi_0$, that is, there exists $\rho_1 > 0$ such that
	$$\varphi_0(u_0)\leq \varphi_0 (u_0+h)\ \mbox{for all}\ h\in H^1(\Omega)\ \mbox{with}\ ||h||\leq \rho_1.$$
\end{proposition}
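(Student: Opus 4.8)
\medskip

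The plan is to run the Brezis--Nirenberg scheme (local $C^1$-minimizers are local $H^1$-minimizers), adapted to the Robin setting via the global regularity theory of Wang \cite{16}. I would organize the argument in three stages.

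\emph{Stage 1 (regularity of $u_0$).} First I would note that $u_0$ is a weak solution of
$$-\Delta u_0+\xi(z)u_0=f_0(z,u_0)\ \mbox{in}\ \Omega,\qquad \frac{\partial u_0}{\partial n}+\beta(z)u_0=0\ \mbox{on}\ \partial\Omega .$$
Indeed, for fixed $v\in C^1(\overline{\Omega})$ and small $t>0$ the function $h=tv$ is admissible in the hypothesis, so $t^{-1}(\varphi_0(u_0+tv)-\varphi_0(u_0))\geq 0$; letting $t\to 0^+$ and replacing $v$ by $-v$ gives $\varphi_0'(u_0)(v)=0$, and by density of $C^1(\overline{\Omega})$ in $H^1(\Omega)$ and continuity of $\varphi_0'$ we get $\varphi_0'(u_0)=0$, which is the weak form of the displayed problem. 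Since $f_0$ has subcritical growth and $\xi\in L^s(\Omega)$ with $s>N$, a standard bootstrap (Calder\'on--Zygmund estimates together with the up-to-the-boundary regularity theory of \cite{16}, cf. also \cite{11}) yields first $u_0\in L^\infty(\Omega)$ and then $u_0\in C^{1,\alpha}(\overline{\Omega})$ with $\alpha=1-N/s$.

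\emph{Stage 2 (set-up of the contradiction).} Replacing $\xi$ by $\xi+c$ and $f_0(z,x)$ by $f_0(z,x)+cx$ for $c>0$ large leaves $\varphi_0$, and hence the notion of a local minimizer in either topology, unchanged, while the new $f_0$ still satisfies a subcritical growth condition and the new bilinear form $a(u,v)=\int_\Omega Du\cdot Dv\,dz+\int_\Omega\xi uv\,dz+\int_{\partial\Omega}\beta uv\,d\sigma$ becomes coercive; here one uses $|\int_\Omega\xi u^2\,dz|\leq\|\xi\|_s\|u\|_{2s'}^2$ together with the compactness of $H^1(\Omega)\hookrightarrow L^{2s'}(\Omega)$ (note $2s'<2^*$). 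Thus $\vartheta(\cdot)^{1/2}$ is a norm on $H^1(\Omega)$ equivalent to $\|\cdot\|$. Suppose, for contradiction, that $u_0$ is not a local $H^1(\Omega)$-minimizer of $\varphi_0$. Then for every $\ep>0$ we have $m_\ep:=\inf\{\varphi_0(u):\vartheta(u-u_0)^{1/2}\leq\ep\}<\varphi_0(u_0)$; since $\varphi_0$ is sequentially weakly lower semicontinuous (the quadratic part by convexity, the lower-order and boundary terms by the compact embeddings $H^1(\Omega)\hookrightarrow L^r(\Omega)$ and the compactness of the trace map) and the constraint ball is weakly compact, the infimum is attained at some $u_\ep$ with $\vartheta(u_\ep-u_0)^{1/2}\leq\ep$ and $\varphi_0(u_\ep)=m_\ep<\varphi_0(u_0)$, in particular $u_\ep\neq u_0$.

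\emph{Stage 3 (uniform regularity and conclusion).} Because the constraint ball is defined by the same quadratic form $\vartheta$ that is the principal part of $\varphi_0$, the Lagrange multiplier rule provides $\mu_\ep\leq 0$ with $a(u_\ep,v)-\int_\Omega f_0(z,u_\ep)v\,dz=\mu_\ep\, a(u_\ep-u_0,v)$ for all $v\in H^1(\Omega)$. Using $a(u_0,v)=\int_\Omega f_0(z,u_0)v\,dz$ from Stage 1 and dividing by $1-\mu_\ep\geq 1$, this rearranges to
$$a(u_\ep,v)=\int_\Omega\big[\theta_\ep f_0(z,u_\ep)+(1-\theta_\ep)f_0(z,u_0)\big]v\,dz\qquad\mbox{for all}\ v\in H^1(\Omega),$$
with $\theta_\ep=(1-\mu_\ep)^{-1}\in(0,1]$. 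Hence $u_\ep$ solves a Robin problem whose right-hand side is bounded in $L^{2^*/(r-1)}(\Omega)$ uniformly in $\ep$ (since $u_\ep\to u_0$ in $H^1(\Omega)$, so $\{u_\ep\}$ is bounded in $L^{2^*}(\Omega)$, and $u_0$ is fixed). Running the Stage 1 bootstrap while keeping track of uniformity, one obtains $M>0$ and $\alpha\in(0,1)$ with $\|u_\ep\|_{C^{1,\alpha}(\overline{\Omega})}\leq M$ for all small $\ep$. By the compact embedding $C^{1,\alpha}(\overline{\Omega})\hookrightarrow C^1(\overline{\Omega})$ and $u_\ep\to u_0$ in $H^1(\Omega)$, we conclude $u_\ep\to u_0$ in $C^1(\overline{\Omega})$ as $\ep\to 0^+$. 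Then for $\ep$ small, $\|u_\ep-u_0\|_{C^1(\overline{\Omega})}\leq\rho_0$, so the local $C^1$-minimality gives $\varphi_0(u_0)\leq\varphi_0(u_\ep)=m_\ep<\varphi_0(u_0)$, a contradiction. Therefore $u_0$ is a local $H^1(\Omega)$-minimizer.

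I expect the main obstacle to be Stage 3: carrying the elliptic bootstrap through with constants independent of $\ep$ and, above all, up to the Robin boundary, which is where one must invoke the global regularity of \cite{16} rather than interior estimates alone. The coercivity reduction in Stage 2 is what makes this feasible, since it forces the multiplier factor $\theta_\ep$ to stay in the bounded range $(0,1]$ and thereby keeps all the relevant norms of the right-hand side controlled uniformly in $\ep$.
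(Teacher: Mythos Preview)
The paper does not actually prove Proposition~\ref{prop4}; it merely records that the result ``follows from Papageorgiou and R\u adulescu \cite[Proposition~3]{11} using the regularity theory of Wang \cite{16}.'' Your proposal supplies a complete argument along the classical Brezis--Nirenberg scheme, and this is precisely the strategy underlying the cited reference. The three stages are correctly laid out: the coercivity shift in Stage~2 is exactly the content of inequality~(\ref{eq2}) in the paper, and it is what pins the multiplier factor $\theta_\ep$ in $(0,1]$, making the uniform bootstrap in Stage~3 go through; your identification of Wang's global (up-to-the-Robin-boundary) $C^{1,\alpha}$ regularity as the essential external input matches what the paper invokes. So your proof is correct and is substantively the same approach the paper cites rather than proves.
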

	
We will need some facts concerning the spectrum of $-\Delta$ with Robin boundary condition. Details can be found in Papageorgiou and R\u adulescu \cite{11, 14}.

So, we consider the following linear eigenvalue problem
\begin{equation}\label{eq1}
	-\Delta u(z)+\xi(z)u(z)=\hat{\lambda}u(z)\ \mbox{in}\ \Omega,\ \frac{\partial u}{\partial n} + \beta (z)u=0\ \mbox{on}\ \partial\Omega.
\end{equation}

We know that there exists $\mu > 0$ such that
\begin{equation}\label{eq2}
	\vartheta(u)+\mu ||u||^2_2 \geq c_0 ||u||^2\ \mbox{for all}\ u\in H^1(\Omega)\ \mbox{and for some}\ c_0>0.
\end{equation}

Using (\ref{eq2}) and the spectral theorem for compact self-adjoint operators, we generate the spectrum of (\ref{eq1}), which consists of a strictly increasing sequence $\{ \hat{\lambda}_k \}_{k \geq 1} \subseteq \RR$ such that $\hat{\lambda}_k\rightarrow + \infty$. Also, there is a corresponding sequence $\left\{ \hat{u}_n \right\}_{n \geq 1} \subseteq H^1(\Omega)$ of eigenfunctions which form an orthonormal basis of $H^1(\Omega)$ and an orthogonal basis of $L^2(\Omega)$. In fact, the regularity theory of Wang \cite{16} implies that $\left\{ \hat{u}_n \right\}_{n \geq 1} \subseteq C^1(\overline{\Omega})$. By $E(\hat{\lambda}_k)$ (for every $k\in \NN$) we denote the eigenspace corresponding to the eigenvalue $\hat{\lambda}_k,k\in \NN$. We have the following orthogonal direct sum decomposition
$$H^1(\Omega)=\overline{ \mathop{\oplus}\limits_{k \geq 1} E(\hat{\lambda}_k)}.$$
	
Each eigenspace $E(\hat{\lambda}_k)$ has the so-called ``unique continuation property" (UCP for short) which says that if $u\in E(\hat{\lambda}_k)$ vanishes on a set of positive Lebesgue measure, then $u=0$. The eigenvalues $\{ \hat{\lambda}_k \}_{k\geq 1}$ have the following properties:
\begin{eqnarray}\label{eq3}
	&\bullet & \hat{\lambda}_1\ \mbox{is simple (that is},\ \dim E(\hat{\lambda}_1)=1);\nonumber \\
	&\bullet & \hat{\lambda}_1 =\inf\left[ \frac{\vartheta(u)}{||u||^2_2}:u\in H^1(\Omega),u\neq 0 \right]; \\
	&\bullet & \mbox{for}\ m \geq 2\ \mbox{we have} \nonumber
\end{eqnarray}
\begin{align}\label{eq4}
	\hat{\lambda}_m  = & \sup\left[ \frac{\vartheta(u)}{||u||^2_2}: u \in \mathrel{\mathop{\oplus}^{m}_{k=1}} E(\hat{\lambda}_k), u\neq 0 \right] \nonumber \\
	 				 = & \inf\left[ \frac{\vartheta(u)}{||u||^2_2}: u \in \overline{\mathop{\oplus}\limits_{k \geq m} E(\hat{\lambda}_k)}, u\neq 0 \right]
\end{align}

In (\ref{eq3}) the infimum is realized on $E(\hat{\lambda}_1)$.

In (\ref{eq4}) both the supremum and the infimum are realized on $E(\hat{\lambda}_m)$.

From these properties, it is clear that the elements of $E(\hat{\lambda}_1)$ have constant sign while for $m \geq 2$ the elements of $E(\hat{\lambda}_m)$ are nodal (that is, sign changing). Let $\hat{u}_1$ denote the $L^2$-normalized (that is, $||\hat{u}_1||_2=1$) positive eigenfunction corresponding to $\hat{\lambda}_1$. As we have already mentioned, $\hat{u}_1\in C_+ \backslash \left\{ 0 \right\}$. Using Harnack's inequality (see, for example Motreanu, Motreanu and Papageorgiou \cite[p. 212]{10}), we have that $\hat{u}_1 (z)>0$ for all $z\in \Omega$. Moreover, if $\xi^+ \in L^\infty (\Omega)$, then using the strong maximum principle, we have $\hat{u}_1 \in D_+$.

The following useful inequalities are also easy consequences of the above properties.
\begin{proposition}\label{prop5}
	\begin{itemize}
		\item[(a)] If $m\in \NN,\ \eta\in L^\infty (\Omega),\ \eta(z)\leq \hat{\lambda}_m\ \mbox{for almost all}\ z\in \Omega,\ \eta \not\equiv \hat{\lambda}_m,$ then $\vartheta(u)-\int_\Omega \eta (z)u^2 dz \geq c_1 ||u||^2\ \mbox{for all}\ u\in \overline{\mathop{\oplus}\limits_{k \geq m} E(\hat{\lambda}_k)}\ \mbox{and for some}\ c_1>0$.
		\item[(b)] If $m\in \NN,\ \eta\in L^\infty (\Omega),\ \eta(z)\geq \hat{\lambda}_m\ \mbox{for almost all}\ z\in \Omega,\ \eta \not\equiv \hat{\lambda}_m,$ then $\vartheta(u)-\int_\Omega \eta (z)u^2 dz \leq - c_2 ||u||^2\ \mbox{for all}\ u\in \mathrel{\mathop{\oplus}^{m}_{k=1}} E(\hat{\lambda}_k)\ \mbox{and for some}\ c_2>0$.
	\end{itemize}
\end{proposition}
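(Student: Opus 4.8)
The plan is to exploit the variational characterizations \eqref{eq3}, \eqref{eq4} of the eigenvalues together with the orthogonal direct sum decomposition of $H^1(\Omega)$. For part (a), every $u\in\overline{\oplus_{k\geq m}E(\hat\lambda_k)}$ satisfies $\vartheta(u)\geq\hat\lambda_m\|u\|_2^2$ by the infimum formula in \eqref{eq4} (for $m=1$ use \eqref{eq3}). Hence
$$\vartheta(u)-\int_\Omega\eta(z)u^2\,dz\;\geq\;\int_\Omega(\hat\lambda_m-\eta(z))u^2\,dz\;\geq\;0,$$
using $\eta\leq\hat\lambda_m$. This already gives the inequality with $c_1=0$; the real content is the \emph{strict} positivity of the constant. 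I would argue by contradiction: if no such $c_1>0$ exists, pick a sequence $u_n$ in the (closed) subspace $\overline{\oplus_{k\geq m}E(\hat\lambda_k)}$ with $\|u_n\|=1$ and $\vartheta(u_n)-\int_\Omega\eta u_n^2\,dz\to 0$. By reflexivity pass to a subsequence with $u_n\rightharpoonup u$ in $H^1(\Omega)$; by the compactness of the embeddings $H^1(\Omega)\hookrightarrow L^2(\Omega)$ and $H^1(\Omega)\hookrightarrow L^2(\partial\Omega)$, $u_n\to u$ in $L^2(\Omega)$ and in $L^2(\partial\Omega)$. Then $\int_\Omega\xi u_n^2\,dz\to\int_\Omega\xi u^2\,dz$ (since $\xi\in L^s(\Omega)$ with $s>N$ and Sobolev embedding controls the remaining exponent) and $\int_{\partial\Omega}\beta u_n^2\,d\sigma\to\int_{\partial\Omega}\beta u^2\,d\sigma$, while $\|Du\|_2^2\leq\liminf\|Du_n\|_2^2$ by weak lower semicontinuity. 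Combining, $0=\lim[\vartheta(u_n)-\int_\Omega\eta u_n^2\,dz]\geq\vartheta(u)-\int_\Omega\eta u^2\,dz\geq 0$, so both sides vanish.

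From $\vartheta(u)-\int_\Omega\eta u^2\,dz=0$ and $\eta\leq\hat\lambda_m$ we get $\int_\Omega(\hat\lambda_m-\eta)u^2\,dz=0$ together with $\vartheta(u)=\hat\lambda_m\|u\|_2^2$; the latter means $u$ realizes the infimum in \eqref{eq4}, hence $u\in E(\hat\lambda_m)$. Then $(\hat\lambda_m-\eta(z))u(z)^2=0$ a.e.; since $\eta\not\equiv\hat\lambda_m$, the set $\{\hat\lambda_m-\eta>0\}$ has positive measure, so $u$ vanishes on a set of positive measure, and by the UCP of $E(\hat\lambda_m)$ we conclude $u=0$. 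But then $u_n\to 0$ in $L^2(\Omega)$ and in $L^2(\partial\Omega)$, and $\vartheta(u_n)-\int_\Omega\eta u_n^2\,dz\to 0$ forces $\|Du_n\|_2\to 0$ as well (the lower-order terms all vanish in the limit), so $\|u_n\|\to 0$, contradicting $\|u_n\|=1$. This proves (a).

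Part (b) is entirely symmetric, working with the finite-dimensional subspace $\oplus_{k=1}^{m}E(\hat\lambda_k)$ and the supremum formula in \eqref{eq4}, which gives $\vartheta(u)\leq\hat\lambda_m\|u\|_2^2$ there. The inequality chain is reversed: $\vartheta(u)-\int_\Omega\eta u^2\,dz\leq\int_\Omega(\hat\lambda_m-\eta)u^2\,dz\leq 0$ using $\eta\geq\hat\lambda_m$. For the strict constant one can either repeat the compactness argument or, more cheaply, note that $\oplus_{k=1}^m E(\hat\lambda_k)$ is finite-dimensional, so the norm $\|\cdot\|$ and the $L^2$-norm are equivalent on it and the quadratic form $u\mapsto\vartheta(u)-\int_\Omega\eta u^2\,dz$ is continuous; its supremum over the unit sphere of this subspace is attained, and by the same UCP argument as above any maximizer achieving value $0$ would have to be $0$, which is impossible on the unit sphere — so the supremum is some $-c_2<0$.

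The main obstacle is not any single estimate but making the limiting step rigorous: one must verify that the perturbed quadratic form $u\mapsto\vartheta(u)-\int_\Omega\eta u^2\,dz$ is weakly lower semicontinuous on the relevant subspace and, crucially, that the subspace $\overline{\oplus_{k\geq m}E(\hat\lambda_k)}$ is weakly closed (being a closed linear subspace, it is). The subtle point is the passage from "$\vartheta(u)=\hat\lambda_m\|u\|_2^2$" to "$u\in E(\hat\lambda_m)$", which relies on the infimum in \eqref{eq4} being realized precisely on $E(\hat\lambda_m)$ (stated in the excerpt just after \eqref{eq4}), and then invoking the UCP to kill $u$ — this is where the hypothesis $\eta\not\equiv\hat\lambda_m$ is indispensable.
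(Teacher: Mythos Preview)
The paper does not actually prove Proposition~\ref{prop5}; it is merely stated with the remark that these inequalities ``are also easy consequences of the above properties'' (the variational characterizations \eqref{eq3}--\eqref{eq4} and the UCP of the eigenspaces). Your argument is correct and is precisely the standard way to substantiate such a claim: the nonnegativity is immediate from \eqref{eq4}, and the strict positivity of the constant follows from the contradiction/compactness argument combined with the fact that the infimum in \eqref{eq4} is attained exactly on $E(\hat\lambda_m)$ together with the UCP, which is where $\eta\not\equiv\hat\lambda_m$ enters. Your observation that part (b) can alternatively be handled by finite-dimensionality (attainment of the supremum on the unit sphere) is a nice simplification over repeating the sequential argument.
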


Finally, let us fix some basic notations and terminology. So, by\\ $A\in\mathcal{L}(H^1(\Omega), H^1(\Omega)^*)$ we denote the linear operator defined by
$$\left\langle A(u),h\right\rangle = \int_\Omega (Du,Dh)_{\RR^N}dz\ \mbox{for all}\ u,h\in H^1(\Omega).$$

A Banach space $X$ is said to have the ``Kadec-Klee property" if the following holds
$$``u_n \stackrel{w}{\rightarrow}u\ \mbox{in}\ X\ \mbox{and}\ ||u_n||\rightarrow||u||\Rightarrow u_n\rightarrow u\ \mbox{in}\ X".$$

Locally uniformly convex Banach spaces, in particular Hilbert spaces, have the Kadec-Klee property.

Let $x\in\RR$. We set $x^\pm =\max\left\{ \pm x,0 \right\}$ and  for $u\in H^1(\Omega)$ we define
$$u^\pm(\cdot)=u(\cdot)^\pm.$$

We know that
$$u^\pm \in H^1(\Omega),\ |u|=u^+ + u^-,\ u=u^+-u^-.$$

By $|\cdot|_N$ we denote the Lebesgue measure on $\RR^N$. Also, if $\varphi\in C^1 (X,\RR)$ then
$$K_\varphi= \left\{ u\in X: \varphi'(u)=0\right\}\ \mbox{(the critical set of}\ \varphi ).$$

If $p\in\left[1,\infty\right)$, then $p'\in(1,+\infty]$ and $\frac{1}{p}+\frac{1}{p'}=1$. Finally, we set
$$n_0=\max\left\{ k \in \NN :\hat{\lambda}_k \leq 0 \right\}.$$

If $\hat{\lambda}_k>0$ for all $k\in \NN$ (this is the case if $\xi \geq 0$ and $\xi \not\equiv 0 $ or $\beta\not\equiv 0$), then we set $n_0=0$.

\section{Positive solutions}

The hypotheses on the data of problem (\ref{eqp}) are the following:

\smallskip
$H(\xi):\ \xi\in L^s(\Omega)\ \mbox{with}\ s>N\ \mbox{and}\ \xi^+ \in L^\infty (\Omega)$.

\smallskip
$H(\beta):\ \beta\in W^{1,\infty}(\partial\Omega)\ \mbox{and}\ \beta(z)\geq 0\ \mbox{for all}\ z\in \partial\Omega$.

\smallskip
$H(g):\ g:\Omega\times\RR\rightarrow\RR\ \mbox{is a Carath\'eodory function such that}$
\begin{itemize}
	\item[(i)] for every $\rho>0$, there exists $a_\rho\in L^\infty(\Omega)_+$ such that
	$$g(z,x)\leq a_\rho (z)\ \mbox{for almost all}\ z\in \Omega\ \mbox{and all}\ x\in [0,\rho];$$
	\item[(ii)] $\lim\limits_{x\rightarrow +\infty}\frac{g(z,x)}{x} = 0$ uniformly for almost all $z\in\Omega$;
	\item[(iii)] there exist constants $0<c_3<c_4$ and $q\in (1,2)$ such that
	$$c_3 x^{q-1}\leq g(z,x)\ \mbox{for almost all}\ z\in \Omega\ \mbox{and all}\ x\geq 0,$$
	$$\limsup\limits_{x\rightarrow 0^+}\frac{g(z,x)}{x^{q-1}}\leq c_4\ \mbox{uniformly for almost all}\ z\in\Omega ;$$
	\item[(iv)] if $G(z,x)=\int^x_0 g(z,s)ds$,\ \mbox{then}\ $g(z,x)x-2G(z,x)\leq 0$ for almost all $z\in\Omega$ and all $x\geq 0$;
	\item[(v)] for every $\rho > 0$, there exists $\hat{\xi}_\rho > 0$ such that for almost all $z\in \Omega$ the function
	$$x\mapsto g(z,x)+\hat{\xi}_\rho x$$
	is nondecreasing on $[0,\rho]$.
\end{itemize}

\smallskip
$H(f):\ f:\Omega\times\RR\rightarrow\RR$ is a Carath\'eodory function such that
\begin{itemize}
	\item[(i)] $|f(z,x)|\leq a(z)(1+x^{r-1})$ for almost all $z\in\Omega$ and all $x\geq 0$ with $a\in L^\infty(\Omega),\ 2<r<2^*$;
	\item[(ii)] $\lim\limits_{x\rightarrow +\infty}\frac{f(z,x)}{x}=+\infty$ uniformly for almost all $z\in\Omega$;
	\item[(iii)] $\lim\limits_{x\rightarrow 0}\frac{f(z,x)}{x}=0$ uniformly for almost all $z\in\Omega$ and there exists $\delta_0 >0$ such that
	$$f(z,x)\geq 0\ \mbox{for almost all}\ z\in\Omega\ \mbox{and all}\ x\in[0,\delta_0];$$
	\item[(iv)] for every $\rho > 0$, there exists $\tilde{\xi}_\rho >0$ such that for almost all $z\in\Omega$ the function
	$$x\rightarrow f(z,x)+\tilde{\xi}_\rho x$$
	is nondecreasing on $[0,\rho]$
\end{itemize}

We set $F(z,x)=\int^x_0 f(z,s)ds$ and define
$$\gamma_\lambda(z,x)=\lambda g(z,x)+f(z,x)-2[\lambda G(z,x)+F(z,x)]\ \mbox{for all}\ (z,x)\in \Omega\times\RR_+.$$

\smallskip
$H_0:$ For every $\lambda > 0$, there exists $e_\lambda\in L^1(\Omega)$ such that
$$\gamma_\lambda (z,x)\leq\gamma_\lambda(z,y)+e_\lambda(z)\ \mbox{for almost all}\ z\in\Omega\ \mbox{and all}\ 0\leq x \leq y.$$
\begin{remark}
Since we are looking for positive solutions and all of the above hypotheses concern the positive semi-axis $\RR_{+}=[0,+\infty)$,  we may assume without any loss of generality that
$$g(z,x)=f(z,x)=0\ \mbox{for almost all}\ z\in\Omega\ \mbox{all}\ x\leq 0$$
(note that hypotheses $H(g)(iii)$ and $H(f)(iii)$ imply that $g(z,0)=f(z,0)=0$ for almost all $z\in\Omega$). Hypothesis $H(g)(ii)$ implies that for almost all $z\in\Omega,\ g(z,\cdot)$ is
strictly sublinear near $+\infty$. This, together with hypothesis $H(g)(iii)$, implies that
$g(z,\cdot)$ is
globally the ``concave" contribution to the reaction of problem (\ref{eqp}). On the other hand, hypothesis $H(f)(ii)$ implies that for almost all $z\in\Omega,\ f(z,\cdot)$ is strictly superlinear near $+\infty$. Hence $f(z,x)$ is globally the ``convex" contribution to the reaction of (\ref{eqp}). Therefore on the right-hand side (reaction) of problem (\ref{eqp}), we have the competition of concave and convex nonlinearities (``concave-convex problem"). We stress that the superlinearity of $f(z,\cdot)$ is not expressed using the well-known Ambrosetti-Rabinowitz condition (see Ambrosetti and Rabinowitz \cite{3}). Instead, we use hypothesis $H_0$, which is a slightly more general version of a condition used by Li and Yang \cite{9}. Hypothesis $H_0$ is less restrictive than the Ambrosetti-Rabinowitz superlinearity condition and permits the consideration of superlinear terms with ``slower" growth near $+\infty$, which fail to satisfy the AR-condition (see the examples below). Hypothesis $H_0$ is a quasimonotonicity condition on $\gamma_\lambda(z,\cdot)$ and it is satisfied if there exists $M=M(\lambda)>0$ such that for almost all $z\in\RR$,
$$x\mapsto\frac{\lambda g(z,x)+f(z,x)}{x}$$
is nondecreasing on $[M,+\infty)$ (see \cite{9}).
\end{remark}

{\bf Examples}. The following pair satisfies hypotheses $H(g)$ and $H(f)$:
$$g(z,x)=a(z)x^{q-1},\ f(z,x)=b(z)x^{r-1}\ \mbox{for all}\ x\geq 0$$
with $a,b\in L^\infty(\Omega),\ a(z),b(z)>0$ for almost all $z\in\Omega$ and $1<q<2<r<2^*$. If $a\equiv b \equiv 1$, this is the reaction pair used by Ambrosetti, Brezis and Cerami \cite{2} in the context of Dirichlet problems with zero potential (that is, $\xi\equiv 0$). The above reaction pair was used by R\u adulescu and Repov\v s \cite{15}, again for Dirichlet problems with $\xi\equiv 0$.

Another possibility of a reaction pair which satisfies hypotheses $H(g)$ and $H(f)$ are the following functions (for the sake of simplicity, we drop the $z$-dependence):
\begin{eqnarray*}
&&g(x) = \left\{\begin{array}{ll}
	2x^{q-1}-x^{\tau-1}	&	\mbox{if}\ 0\leq x\leq 1\\
	x^{\eta-1}			&	\mbox{if}\ 1<x
	\end{array}\right. \mbox{with}\ 1<q,\eta<2<\tau\\
	&&\hspace{3cm}\mbox{and}\ f(x)=x\ln(1+x)\ \mbox{for all}\ x\geq 0.
\end{eqnarray*}

In this pair, the superlinear term $f(x)$ fails to satisfy the Ambrosetti-Rabinowitz condition.

Let $\mu > 0$ be as in (\ref{eq2}) and $\lambda > 0$. Let $k_\lambda : \Omega \times \RR\rightarrow\RR$ be the Carath\'eodory function defined by
\begin{equation}\label{eq5}
	k_\lambda(z,x)=\lambda g(z,x)+f(z,x)+\mu x^+\,.
\end{equation}

We set $K_\lambda(z,x)=\int^x_0 k_\lambda(z,s)ds$ and consider the $C^1$-functional $\hat{\varphi}_\lambda:H^1(\Omega)\rightarrow\RR$ defined by
$$\hat{\varphi}_\lambda(u)=\frac{1}{2}\vartheta(u)+\frac{\mu}{2}||u||^2_2-\int_\Omega K_\lambda (z,u)dz\ \mbox{for all}\ u\in H^1(\Omega).$$
\begin{proposition}\label{prop6}
	If hypotheses $H(\xi),\ H(\beta),\ H(g),\ H(f)$ and $H_0$ hold, then for every $\lambda > 0$ the functional $\hat\varphi_\lambda$ satisfies the C-condition.	
\end{proposition}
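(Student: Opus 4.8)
The plan is to verify the two defining ingredients of the C-condition in turn: first that any Cerami sequence $\{u_n\}_{n\geq 1}\subseteq H^1(\Omega)$ for $\hat\varphi_\lambda$ is bounded, and then that boundedness plus the Cerami gradient condition forces strong convergence of a subsequence. So fix $\lambda>0$ and let $\{u_n\}_{n\geq1}$ be such that $|\hat\varphi_\lambda(u_n)|\leq M_1$ for all $n$ and $(1+\|u_n\|)\hat\varphi_\lambda'(u_n)\to 0$ in $H^1(\Omega)^*$. Testing $\hat\varphi_\lambda'(u_n)$ with $-u_n^-\in H^1(\Omega)$ and using \eqref{eq5} together with the sign convention $g(z,x)=f(z,x)=0$ for $x\leq0$, the reaction terms drop out and we get $\vartheta(u_n^-)+\mu\|u_n^-\|_2^2\leq \ep_n$; by \eqref{eq2} this gives $c_0\|u_n^-\|^2\leq\ep_n$, hence $u_n^-\to 0$ in $H^1(\Omega)$. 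It therefore suffices to bound $\{u_n^+\}_{n\geq1}$.

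For the positive parts I would argue by contradiction: suppose $\|u_n^+\|\to\infty$ and set $y_n=u_n^+/\|u_n^+\|$, so $\|y_n\|=1$ and, passing to a subsequence, $y_n\stackrel{w}{\to}y$ in $H^1(\Omega)$ and $y_n\to y$ in $L^2(\Omega)$ and in $L^r(\Omega)$ (Sobolev embedding, using $r<2^*$), with $y\geq 0$. The key device is the standard one for superlinear problems without Ambrosetti--Rabinowitz: from $\hat\varphi_\lambda(u_n)\leq M_1$ and $\langle\hat\varphi_\lambda'(u_n),u_n\rangle\to 0$ one forms $2\hat\varphi_\lambda(u_n)-\langle\hat\varphi_\lambda'(u_n),u_n\rangle$; the $\vartheta$ and $\mu\|\cdot\|_2^2$ contributions cancel and one is left (on $\{u_n>0\}$) with $\int_\Omega\gamma_\lambda(z,u_n^+)\,dz\leq M_2$ for a constant $M_2$, where $\gamma_\lambda$ is exactly the quantity introduced before hypothesis $H_0$. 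Now one splits into two cases according to whether $y\neq 0$ or $y=0$. If $y\neq 0$, then on the set $\{y>0\}$ (of positive measure) we have $u_n^+(z)\to+\infty$, and hypotheses $H(f)(ii)$ (and $H(g)(iii)$) give $[\lambda G(z,u_n^+)+F(z,u_n^+)]/\|u_n^+\|^2\to+\infty$ pointwise there, while dividing $\hat\varphi_\lambda(u_n)\leq M_1$ by $\|u_n^+\|^2$ and using Fatou's lemma forces the left side to $-\infty$, a contradiction. If $y=0$, one uses the quasimonotonicity hypothesis $H_0$: pick $t_n\in[0,1]$ with $\hat\varphi_\lambda(t_nu_n)=\max_{t\in[0,1]}\hat\varphi_\lambda(tu_n)$; for any fixed $\zeta>0$ the vectors $\zeta y_n$ lie in $L^r$ and $\int_\Omega[\lambda G(z,\zeta y_n)+F(z,\zeta y_n)]\,dz\to 0$ (since $y_n\to 0$ in $L^r$ and the growth of $G,F$ is controlled by $H(g)(i)$--$(iii)$ and $H(f)(i)$), so $\hat\varphi_\lambda(\zeta y_n)\geq\frac12[\vartheta(\zeta y_n)+\mu\|\zeta y_n\|_2^2]-o(1)\geq\frac{c_0}{2}\zeta^2-o(1)$; since $\zeta$ is arbitrary and $\|u_n^+\|\to\infty$ eventually $\zeta y_n=(\zeta/\|u_n^+\|)u_n$ with $\zeta/\|u_n^+\|\in(0,1)$, whence $\hat\varphi_\lambda(t_nu_n)\geq\frac{c_0}{2}\zeta^2-o(1)$, i.e. $\hat\varphi_\lambda(t_nu_n)\to+\infty$. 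On the other hand $\frac{d}{dt}\hat\varphi_\lambda(tu_n)|_{t=t_n}=0$ gives $\langle\hat\varphi_\lambda'(t_nu_n),t_nu_n\rangle=0$ (or $t_n\in\{0,1\}$, handled separately), and then $H_0$ applied with $x=t_nu_n^+\leq u_n^+=y$ yields $\int_\Omega\gamma_\lambda(z,t_nu_n^+)\,dz\leq\int_\Omega\gamma_\lambda(z,u_n^+)\,dz+\|e_\lambda\|_1\leq M_2+\|e_\lambda\|_1$; but $2\hat\varphi_\lambda(t_nu_n)=\langle\hat\varphi_\lambda'(t_nu_n),t_nu_n\rangle+\int_\Omega\gamma_\lambda(z,t_nu_n^+)\,dz\leq M_2+\|e_\lambda\|_1$, contradicting $\hat\varphi_\lambda(t_nu_n)\to+\infty$. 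Either way we reach a contradiction, so $\{u_n^+\}$, hence $\{u_n\}$, is bounded in $H^1(\Omega)$.

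With boundedness in hand, pass to a subsequence so that $u_n\stackrel{w}{\to}u$ in $H^1(\Omega)$ and $u_n\to u$ in $L^r(\Omega)$ and in $L^2(\partial\Omega)$ (trace compactness, since $r<2^*$ and the trace is compact into $L^2(\partial\Omega)$). Testing $\hat\varphi_\lambda'(u_n)$ with $u_n-u$ gives
$$\langle A(u_n),u_n-u\rangle+\int_\Omega(\xi(z)+\mu)u_n(u_n-u)\,dz+\int_{\partial\Omega}\beta(z)u_n(u_n-u)\,d\sigma-\int_\Omega k_\lambda(z,u_n)(u_n-u)\,dz\to 0.$$
The last three integrals tend to $0$ by the strong convergences just noted together with the growth bounds on $\xi$ ($\xi^+\in L^\infty$, $\xi\in L^s$ with $s>N$, via Hölder and Sobolev), on $\beta$, and on $k_\lambda$ (from $H(g)(i)$--$(iii)$, $H(f)(i)$), so $\langle A(u_n),u_n-u\rangle\to 0$, i.e. $\|Du_n\|_2\to\|Du\|_2$; combined with $u_n\to u$ in $L^2(\Omega)$ this gives $\|u_n\|\to\|u\|$. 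Since $H^1(\Omega)$ is a Hilbert space it has the Kadec--Klee property, so $u_n\to u$ in $H^1(\Omega)$, which is the desired strong convergence and completes the verification of the C-condition.

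I expect the main obstacle to be the $y=0$ subcase of the boundedness argument: this is precisely where hypothesis $H_0$ (rather than Ambrosetti--Rabinowitz) must be deployed, and care is needed in (i) handling the degenerate possibilities $t_n=0$ or $t_n=1$ in the choice of the maximizing $t_n$, (ii) justifying that $\int_\Omega[\lambda G(z,\zeta y_n)+F(z,\zeta y_n)]\,dz\to 0$ uniformly for fixed $\zeta$ using the subcritical growth of $F$ and the $L^r$-convergence $y_n\to 0$, and (iii) correctly matching the inequality $t_nu_n^+\leq u_n^+$ to the range of validity of $H_0$. The remaining steps—the contradiction via Fatou when $y\neq 0$, and the Kadec--Klee closing argument—are routine.
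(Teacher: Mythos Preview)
Your proposal is correct and follows essentially the same route as the paper's proof: kill $u_n^-$ by testing with $-u_n^-$, derive the uniform bound on $\int_\Omega\gamma_\lambda(z,u_n^+)\,dz$, run the dichotomy $y\neq0$/$y=0$ with Fatou in the first case and the Li--Yang maximizing-$t_n$ trick plus $H_0$ in the second, and close with Kadec--Klee.

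Two small slips to tighten. First, in the $y=0$ case the $\mu$-terms in $\hat\varphi_\lambda$ and $K_\lambda$ cancel exactly for nonnegative arguments, so your intermediate inequality $\hat\varphi_\lambda(\zeta y_n)\geq\tfrac12[\vartheta(\zeta y_n)+\mu\|\zeta y_n\|_2^2]-o(1)$ is not literally true; what you actually have is $\hat\varphi_\lambda(\zeta y_n)=\tfrac12\vartheta(\zeta y_n)-o(1)$, and then the lower bound $\geq\tfrac{c_0}{2}\zeta^2-o(1)$ still follows because $y_n\to0$ in $L^{2s'}(\Omega)$ and $L^2(\partial\Omega)$ forces $\vartheta(\zeta y_n)+\mu\|\zeta y_n\|_2^2-\vartheta(\zeta y_n)\to0$. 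Second, since $y_n=u_n^+/\|u_n^+\|$, the identity $\zeta y_n=(\zeta/\|u_n^+\|)u_n$ is off by $u_n^-$; the paper (and you should) run the $t_n$-maximization over $t\mapsto\hat\varphi_\lambda(tu_n^+)$ rather than $t\mapsto\hat\varphi_\lambda(tu_n)$, which also makes the application of $H_0$ with $0\leq t_nu_n^+\leq u_n^+$ clean. Neither issue affects the overall argument.
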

\begin{proof}
	Let $\{u_n\}_{n\geq 1}\subseteq H^1(\Omega)$ be a sequence such that
	\begin{eqnarray}
		&&|\hat{\varphi}_\lambda (u_n) |\leq M_1\ \mbox{for some}\ M_1>0\ \mbox{and all}\ n\in\NN,\label{eq6}\\
		&&(1+||u_n||)\hat{\varphi}'_\lambda(u_n)\rightarrow 0\ \mbox{in}\ H^1(\Omega)^*\ \mbox{as}\ n\rightarrow \infty.\label{eq7}
	\end{eqnarray}
	
	By (\ref{eq7}) we have
	\begin{equation}\label{eq8}
		\left|	\left\langle A(u_n),h \right\rangle \!+\! \int_\Omega (\xi(z)\!+\!\mu)u_n hdz \!+\! \int_{\partial\Omega}\beta(z)u_n hd\sigma \!-\! \int_\Omega k_\lambda (z,u_n)hdz \right| \leq \frac{\epsilon_n ||h||}{1+||u_n||}\,,
	\end{equation}
for all $h\in H^1(\Omega)$ with $\epsilon_n\rightarrow 0^+$.
	
	In (\ref{eq8}) we choose $h=-u^-_n\in H^1(\Omega)$. Then
	\begin{eqnarray}\label{eq9}
		&				&\vartheta (u^-_n)+\mu ||u^-_n||^2_2 \leq \epsilon_n\ \mbox{for all}\ n\in\NN\ \mbox{(see (\ref{eq5})),}\nonumber \\
		& \Rightarrow	& c_0||u^-_n||^2 \leq \epsilon_n\ \mbox{for all}\ n\in\NN\ \mbox{(see (\ref{eq2})),}\nonumber \\
		& \Rightarrow	& u^-_n\rightarrow 0\ \mbox{in}\ H^1(\Omega).
	\end{eqnarray}
	
	It follows from (\ref{eq6}) and (\ref{eq9}) that
	\begin{equation}\label{eq10}
		\vartheta(u^+_n)-\int_\Omega 2\left[ \lambda G(z,u^+_n)+F(z,u^+_n) \right]dz\leq M_2\ \mbox{for some}\ M_2>0\ \mbox{and all}\ n\in\NN.
	\end{equation}
	
	If in (\ref{eq8}) we choose $h=u^+_n\in H^1(\Omega)$, then
	\begin{equation}\label{eq11}
		-\vartheta(u^+_n)+\int_\Omega \left[ \lambda g(z,u^+_n)+f(z,u^+_n) \right]u^+_n dz\leq \epsilon_n\ \mbox{for all}\ n\in\NN.
	\end{equation}
	
	Adding (\ref{eq10}) and (\ref{eq11}), we obtain
	\begin{equation}\label{eq12}
		\int_\Omega \gamma_{\lambda} (z,u^+_n) dz\leq M_3\ \mbox{for some}\ M_3 > 0\ \mbox{and all}\ n\in\NN.
	\end{equation}

{\bf Claim.} {\it
		$\left\{u^+_n\right\}_{n\geq 1}\subseteq H^1(\Omega)$ is bounded.}
	
	We argue by contradiction. So, suppose that the claim is not true. By passing to a subsequence if necessary, we may assume that $||u^+_n||\rightarrow \infty$. Let $y_n=\frac{u^+_n}{||u^+_n||}$, $n\in\NN$. Then
	$$||y_n||=1,\ y_n \geq 0\ \mbox{for all}\ n\in\NN$$
	and so we may assume that
	\begin{equation}\label{eq13}
		y_n \stackrel{w}{\rightarrow} y\ \mbox{in}\ H^1(\Omega)\ \mbox{and}\ y_n\rightarrow y\ \mbox{in}\ L^{2s'}(\Omega)\ \mbox{and in}\ L^2(\partial\Omega), y \geq 0.
	\end{equation}
	
	Suppose that $y\neq 0$ and let  $\Omega^*=\left\{ z\in \Omega : y(z)>0 \right\}$. Then $|\Omega^*|_N>0$ and
	$$u^+_n(z)\rightarrow+\infty\ \mbox{for almost all}\ z\in \Omega^*.$$
	
	We have
	\begin{eqnarray}
		&&\frac{G(z,u^+_n)}{||u^+_n||^2}=\frac{G(z,u^+_n)}{(u^+_n)^2}y^2_n\rightarrow 0\ \mbox{for a.a.}\ z\in\Omega^*\ \mbox{(see hypothesis H(g)(ii))},\label{eq14}\\
		&&\frac{F(z,u^+_n)}{||u^+_n||^2}\!=\!\frac{F(z,u^+_n)}{(u^+_n)^2}y^2_n\rightarrow \!+\!\infty\ \mbox{for a.a.}\ z\in\Omega^*\ \mbox{(see hypothesis H(f)(ii))}.\label{eq15}
	\end{eqnarray}
	
	It follows from (\ref{eq14}), (\ref{eq15}) and Fatou's lemma that
	\begin{equation}\label{eq16}
		\lim\limits_{n\rightarrow\infty}\left[ \lambda \int_\Omega \frac{G(z,u^+_n)}{||u^+_n||^2} dz + \int_\Omega \frac{F(z,u^+_n)}{||u^+_n||^2}dz \right] = +\infty\,.
	\end{equation}
	
	On the other hand, (\ref{eq6}) and (\ref{eq9}) imply that
	\begin{equation}\label{eq17}
		 \lambda\int_\Omega \frac{G(z,u^+_n)}{||u^+_n||^2} dz + \int_\Omega \frac{F(z,u^+_n)}{||u^+_n||^2}dz \leq \frac{M_1}{||u^+_n||^2}+\frac{1}{2}\vartheta(y_n)+\frac{\mu}{2}||y_n||^2_2\leq M_4
	\end{equation}
	for some $M_4$, all $n\in \NN$.
	
	Comparing (\ref{eq16}) and (\ref{eq17}) we obtain a contradiction.
	
	Next, suppose that $y\equiv 0$. For $\eta>0$ we set $\hat{y}_n=(2\eta)^\frac{1}{2} y_n,\ n\in\NN$. Then $\hat{y}_n\rightarrow 0$ in $H^1(\Omega)$ and so we have
	\begin{equation}\label{eq18}
		\int_\Omega G(z,\hat{y}_n)dz\rightarrow 0\ \mbox{and}\ \int_\Omega F(z,\hat{y}_n)dz\rightarrow 0.
	\end{equation}
	
	Since $||u^+_n||\rightarrow \infty$, we can find $n_1\in\NN$ such that
	\begin{equation}\label{eq19}
		\frac{(2\eta)^{\frac{1}{2}}}{||u^+_n||}\in (0,1]\ \mbox{for all}\ n\geq n_1.
	\end{equation}
	
	We choose $t_n\in [0,1]$ such that
	\begin{equation}\label{eq20}
		\hat{\varphi_\lambda}(t_n u^+_n)=\max\left[ \hat{\varphi}_\lambda(t u^+_n):0\leq t\leq 1 \right].
	\end{equation}
	
	From (\ref{eq19}), (\ref{eq20}) we have
	\begin{align}\label{eq21}
	\hat{\varphi}_\lambda(t_n u^+_n) \geq 	& \hat{\varphi}_\lambda (\hat{y}_n)\ \mbox{(see (\ref{eq19}))} \nonumber \\
									 =		& \frac{1}{2}\vartheta(\hat{y}_n)-\lambda\int_\Omega G(z,\hat{y}_n)dz - \int_\Omega F(z,\hat{y}_n)dz\ \mbox{(see (\ref{eq5}))} \nonumber \\
									 \geq	& \eta - \lambda\int_\Omega G(z,\hat{y}_n)dz-\int_\Omega F(z,\hat{y}_n)dz \nonumber \\
									 \geq	& \frac{1}{2}\eta\ \mbox{for all}\ n\geq n_2 \geq n_1\ \mbox{(see (\ref{eq18}))}.
	\end{align}
	
	Since $\eta >0$ is arbitrary,  we infer from (\ref{eq21}) that
	\begin{equation}\label{eq22}
		\hat{\varphi}_\lambda(t_n u^+_n)\rightarrow+\infty\ \mbox{as}\ n\rightarrow\infty\,.
	\end{equation}
	
	We know that
	\begin{align*}
		             & \hat{\varphi}_\lambda(0)=0\ \mbox{and}\ \hat{\varphi}_\lambda(u^+_n)\leq M_5\ \mbox{for some}\ M_5 >0\ \mbox{and all}\ n\in\NN\ \mbox{(see (\ref{eq6}) and (\ref{eq9}))}, \\
		 \Rightarrow & t_n\in (0,1)\ \mbox{for all}\ n\geq n_3\ \mbox{(see (\ref{eq22}))}.
	\end{align*}
	
	So, (\ref{eq20}) implies that
	\begin{eqnarray}\label{eq23}
		&             & t_n\frac{d}{dt}\hat{\varphi}_\lambda (t u^+_n)|_{t=t_n}=0 ,\nonumber \\
		& \Rightarrow & \left\langle \hat{\varphi}'_{\lambda} (t_n u^+_n),t_n u^+_n \right\rangle = 0\ \mbox{(by the chain rule)},\nonumber \\
		& \Rightarrow & \vartheta(t_n u^+_n)=\int_\Omega \left[ \lambda g(z,t_n u^+_n) + f(z,t_n u^+_n) \right](t_n u^+_n)dz\ \mbox{for all}\ n\geq n_3 .
	\end{eqnarray}
	
	We have $0 \leq t_n u^+_n\leq u^+_n $. Then hypothesis $H_0$ implies that
	\begin{align}\label{eq24}
		            & \gamma_\lambda(z,t_n u^+_n)\leq\gamma_\lambda(z,u^+_n)+e_\lambda(z)\ \mbox{for almost all}\ z\in\Omega\ \mbox{and all}\ n\geq n_3 , ,\nonumber \\
		  \Rightarrow & \int_\Omega \gamma_\lambda (z,t_n u^+_n)dz \leq \int_\Omega\gamma_\lambda(z,u^+_n)dz + ||e_\lambda||_1 \leq M_6\ \mbox{for some}\ M_6 > 0,\ \mbox{all}\ n\geq n_3 \\
		 			  & (\mbox{see}\ (\ref{eq12})). \nonumber
	\end{align}
	
	We return to (\ref{eq23}), add to both sides $-2\int_\Omega [\lambda G(z,t_n u^+_n)+F(z,t_n u^+_n)]dz$ and use (\ref{eq24}).
	
	Then
	\begin{equation}\label{eq25}
		2\hat{\varphi}_\lambda(t_n u^+_n)\leq M_6\ \mbox{for all}\ n\geq n_3 .
	\end{equation}
	
	Comparing (\ref{eq22}) and (\ref{eq25}) again, we get a contradiction.
	
	This proves the claim.
	
	Then (\ref{eq9}) and the claim imply that $\left\{ u_n \right\}_{n\geq 1} \subseteq H^1 (\Omega)$ is bounded. So, we may assume that
	\begin{equation}\label{eq26}
		u_n \stackrel{w}{\rightarrow} u\ \mbox{in}\ H^1(\Omega)\ \mbox{and}\ u_n\rightarrow u\ \mbox{in}\ L^{2s'} (\Omega)\ \mbox{and in}\ L^2(\partial\Omega).
	\end{equation}
	
	In (\ref{eq8}) we choose $h=u_n -u\in H^1 (\Omega)$, pass to the limit as $n\rightarrow \infty$ and use (\ref{eq26}).
	
	Then
	\begin{eqnarray*}
		&             & \lim\limits_{n\rightarrow\infty} \left\langle A(u_n),u_n - u \right\rangle = 0, \\
		& \Rightarrow & ||Du_n||_2\rightarrow ||Du||_2 , \\
		& \Rightarrow & u_n \rightarrow u\ \mbox{in}\ H^1(\Omega)\ \mbox{(by the Kadec-Klee property, see (\ref{eq26}))} \\
		& \Rightarrow & \hat{\varphi}_\lambda\ \mbox{satisfies the C-condition.}
	\end{eqnarray*}
\end{proof}

Let $\mathcal{L}=\{\lambda>0:\mbox{problem \eqref{eqp} admits a positive solution}\}$.
$$S^{\lambda}_{+}=\mbox{set of positive solutions of \eqref{eqp}}.$$
\begin{proposition}\label{prop7}
	If hypotheses $H(\xi),H(\beta),H(g),H(f)$ and $H_0$ hold, then $\mathcal{L}\neq\emptyset$ and when $\lambda\in\mathcal{L}$ we have $\left(0,\lambda\right]\subseteq\mathcal{L}$ and for all $\lambda>0$, $S^{\lambda}_{+}\subseteq D_+.$
\end{proposition}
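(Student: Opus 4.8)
I would establish the three assertions in the order: (a) for every $\lambda>0$, $S^\lambda_+\subseteq D_+$; (b) $\mathcal L\neq\emptyset$; (c) $\lambda\in\mathcal L\Rightarrow(0,\lambda]\subseteq\mathcal L$. For (a), let $u\in S^\lambda_+$. Since $u\in H^1(\Omega)$, $u\ge0$, and by $H(g)(i),H(f)(i)$ the reaction $\lambda g(\cdot,u)+f(\cdot,u)$ lies in $L^\infty(\Omega)$, Moser iteration gives $u\in L^\infty(\Omega)$, and then the regularity theory of Wang \cite{16} (the one behind Proposition \ref{prop4}) gives $u\in C^{1,\alpha}(\overline\Omega)\cap C_+$. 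With $\rho=\|u\|_\infty$, the monotonicity in $H(g)(v),H(f)(iv)$ and $g(z,0)=f(z,0)=0$ give $\lambda g(z,u)+f(z,u)\ge-(\lambda\hat\xi_\rho+\tilde\xi_\rho)u$, while $\xi(z)u\le\|\xi^+\|_\infty u$ by $H(\xi)$; hence the equation yields $\Delta u\le(\|\xi^+\|_\infty+\lambda\hat\xi_\rho+\tilde\xi_\rho)u$ in $\Omega$. The strong maximum principle then gives $u>0$ in $\Omega$, and Hopf's lemma together with $\partial u/\partial n=-\beta u\le0$ excludes a boundary zero, so $u\in D_+$. The same reasoning applied to $\hat u_1$ shows $\hat u_1\in D_+$, which is used below.

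For (b), I would argue variationally for small $\lambda$, using a coercive truncation of $\hat\varphi_\lambda$: cap the reaction from above at a level $\rho_0$ (replace $k_\lambda(z,\cdot)$ by $0$ on $(-\infty,0)$, by itself on $[0,\rho_0]$, and by the constant $k_\lambda(z,\rho_0)$ on $(\rho_0,\infty)$), and let $\tilde\varphi_\lambda$ be the corresponding $C^1$-functional. Since $\tfrac12\vartheta(\cdot)+\tfrac\mu2\|\cdot\|_2^2\ge\tfrac{c_0}2\|\cdot\|^2$ by \eqref{eq2} and the truncated reaction is bounded, $\tilde\varphi_\lambda$ is coercive and sequentially weakly lower semicontinuous — here one uses that on the set where the penalty $\tfrac\mu2 u^2$ is cancelled the functions are bounded by $\rho_0$, so the mass of a diverging sequence must escape to where the penalty acts — hence $\tilde\varphi_\lambda$ attains its infimum at some $u_\lambda$. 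Testing along $t\hat u_1$ for small $t>0$ (so that $t\hat u_1\le\min\{\rho_0,\delta_0\}$), using $\|\hat u_1\|_2=1$, $\vartheta(\hat u_1)=\hat\lambda_1$, $G(z,x)\ge\tfrac{c_3}{q}x^q$ from $H(g)(iii)$ and $F\ge0$ near $0$ from $H(f)(iii)$, gives $\tilde\varphi_\lambda(t\hat u_1)\le\tfrac12 t^2\hat\lambda_1-\lambda\tfrac{c_3}{q}t^q\|\hat u_1\|_q^q<0$ for $t$ small (the $t^q$-term wins since $q<2$, regardless of the sign of $\hat\lambda_1$), so $u_\lambda\ne0$; testing $\tilde\varphi_\lambda'(u_\lambda)=0$ with $-u_\lambda^-$ and using \eqref{eq2} gives $u_\lambda\ge0$. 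Finally, from $\tilde\varphi_\lambda(u_\lambda)<0$, the coercivity estimate and the $L^\infty$-bound of Wang's theory, $\|u_\lambda\|_\infty$ is controlled by a quantity that tends to $0$ as $\rho_0,\lambda\to0$; choosing $\rho_0$ and then $\lambda_0$ small makes $\|u_\lambda\|_\infty<\rho_0$ for all $0<\lambda\le\lambda_0$, so for such $\lambda$ the cap is inactive, $u_\lambda\in S^\lambda_+$, and $\lambda\in\mathcal L$.

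For (c), let $\lambda\in\mathcal L$, pick $u_\lambda\in S^\lambda_+\subseteq D_+$, and take $0<\ell<\lambda$. Since $g(z,u_\lambda)\ge c_3 u_\lambda^{q-1}\ge0$, $u_\lambda$ is a supersolution of $(P_\ell)$. For $\varepsilon>0$ small, $\hat u_1,u_\lambda\in D_+$ gives $\varepsilon\hat u_1\le u_\lambda$ on $\overline\Omega$ and $\varepsilon\hat u_1\le\delta_0$, and $H(g)(iii),H(f)(iii)$ give $-\Delta(\varepsilon\hat u_1)+\xi(\varepsilon\hat u_1)=\varepsilon\hat\lambda_1\hat u_1\le\ell c_3(\varepsilon\hat u_1)^{q-1}\le\ell g(z,\varepsilon\hat u_1)+f(z,\varepsilon\hat u_1)$ as soon as $\varepsilon^{2-q}\hat\lambda_1\|\hat u_1\|_\infty^{2-q}\le\ell c_3$ (automatic if $\hat\lambda_1\le0$), so $\varepsilon\hat u_1$ is a subsolution of $(P_\ell)$ lying below $u_\lambda$; both satisfy the Robin condition. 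One then solves $(P_\ell)$ inside the order interval $[\varepsilon\hat u_1,u_\lambda]$ in the standard way: truncate its reaction to that interval, add $\hat\zeta x$ with $\hat\zeta\ge\mu$ to both sides, and minimize the resulting coercive, weakly l.s.c.\ $C^1$-functional; testing its critical-point equation with $-v^-$, $(v-u_\lambda)^+$ and $(\varepsilon\hat u_1-v)^+$ and using \eqref{eq2} and the sub-/supersolution inequalities places the minimizer $v_\ell$ in $[\varepsilon\hat u_1,u_\lambda]$, where the truncation is inactive, so $v_\ell\in S^\ell_+$ and $\ell\in\mathcal L$. Hence $(0,\lambda]\subseteq\mathcal L$.

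Parts (a) and (c) are essentially routine (nonlinear regularity plus the strong maximum principle; the method of sub- and supersolutions). The main obstacle is (b): because $\xi$ is indefinite, $\vartheta$ may fail to be positive, so $\hat\varphi_\lambda$ is neither coercive nor possesses a mountain-pass geometry at the origin (the concave term pulls it below $0$ near the origin) and cannot be minimized directly; the delicate point is the quantitative balancing of the truncation level $\rho_0$ against the parameter $\lambda$ that guarantees the minimizer of the coercive truncated functional stays below $\rho_0$, and hence solves the original problem \eqref{eqp}.
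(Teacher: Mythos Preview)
Your arguments for (a) and (c) are essentially correct and close in spirit to the paper's; the paper proves $S^\lambda_+\subseteq D_+$ via Wang's regularity plus the strong maximum principle exactly as you do, and for $(0,\lambda]\subseteq\mathcal L$ it truncates $k_\tau(z,\cdot)$ above at $u_\lambda$, minimizes, and then tests with $-u_\tau^-$ and $(u_\tau-u_\lambda)^+$ (it does not introduce a subsolution $\varepsilon\hat u_1$, but obtains $u_\tau\neq 0$ directly by evaluating the truncated functional at $t u$ for $u\in D_+$ and small $t>0$, using $H(g)(iii)$ and $q<2$).

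The genuine gap is in your step (b). The whole point of the paper is that $\xi$ is indefinite, so one may have $\hat\lambda_1\le 0$ (equivalently $n_0\ge 1$). In that case your constant-cap minimization does not close: already at $\lambda=0$ the truncated functional satisfies, for $0<t\hat u_1\le\min\{\rho_0,\delta_0\}$,
\[
\tilde\varphi_0(t\hat u_1)=\tfrac{1}{2}t^2\hat\lambda_1-\int_\Omega F(z,t\hat u_1)\,dz\le \tfrac{1}{2}t^2\hat\lambda_1<0,
\]
so its minimizer is nontrivial and, since along $\RR_+\hat u_1$ the untruncated energy $\tfrac12\vartheta-\int F$ keeps decreasing until the cap intervenes, that minimizer will sit at or beyond the cap. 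Letting $\lambda\to 0$ therefore does \emph{not} force $\|u_\lambda\|_\infty\to 0$ for fixed $\rho_0$, and the ``choose $\rho_0$, then $\lambda_0$'' scheme fails. Even in the double limit $\rho_0,\lambda\to 0$ the quantitative requirement is $\|u_\lambda\|_\infty<\rho_0$, i.e.\ a rate faster than $\rho_0$; your sketch provides none, and the linear estimate you would get from inverting $-\Delta+\xi+\mu$ against a right-hand side of size $\lambda\rho_0^{q-1}+\mu\rho_0+o(\rho_0)$ only yields $\|u_\lambda\|_\infty\le C(\lambda\rho_0^{q-1}+\mu\rho_0)+o(\rho_0)$, which cannot beat $\rho_0$ unless $C\mu<1$—an inequality you have no reason to expect.

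This is precisely why the paper does \emph{not} minimize. It uses the orthogonal decomposition $H^1(\Omega)=\bar H_{n_0}\oplus\hat H_{n_0+1}$ to build a linking geometry for $\hat\varphi_\lambda$: on the sphere $\hat H_{n_0+1}\cap\partial B_{\hat\rho_\lambda}$ one has $\hat\varphi_\lambda\ge m_\lambda>0$ for $\lambda$ small (Proposition~\ref{prop5}(a) controls $\vartheta$ from below on $\hat H_{n_0+1}$, and the optimal radius $\hat\rho_\lambda$ comes from minimizing $t\mapsto\lambda t^{q-2}+t^{r-2}$), while on the finite-dimensional half-cylinder built from $\bar H_{n_0}\oplus\RR w_0$ one has $\hat\varphi_\lambda\le 0$ (here hypothesis $H(f)(ii)$ is used). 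The linking theorem, together with the C-condition from Proposition~\ref{prop6} (this is where $H_0$ enters), then produces a nontrivial critical point. Your proposal neither uses the splitting along $\bar H_{n_0}$ nor invokes Proposition~\ref{prop6}/$H_0$, and without one of these ingredients the existence step does not go through when $\hat\lambda_1\le 0$.
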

\begin{proof}
	Hypotheses $H(g)(i)\rightarrow(iii)$ imply that given $\epsilon>0$, we can find $c_5=c_5(\epsilon)>0$ such that
	\begin{equation}\label{eq27}
		G(z,x)\leq\frac{\epsilon}{2}x^2+c_5x^q\ \mbox{for almost all}\ z\in\Omega\ \mbox{and all}\ x\geq 0.
	\end{equation}
	
	Similarly, hypotheses $H(f)(i),(iii)$ imply that given $\epsilon>0$, we can find $c_6=c_6(\epsilon)>0$ such that
	\begin{equation}\label{eq28}
		F(z,x)\leq\frac{\epsilon}{2}x^2+c_6x^r\ \mbox{for almost all}\ z\in\Omega\ \mbox{and all}\ x\geq 0.
	\end{equation}
	
	We set
	$$\bar{H}_{n_0}=\overset{n_0}{\underset{\mathrm{k=1}}\oplus}E(\hat{\lambda}_k)\ \mbox{and}\ \hat{H}_{n_0+1}=\bar{H}^\perp_{n_0}=\overline{{\underset{\mathrm{k\geq n_0+1}}\oplus}E(\hat{\lambda}_k)}.$$
	
	We have
	$$H^1(\Omega)=\bar{H}_{n_0}\oplus\hat{H}_{n_0+1}.$$
	
	Recall that $n_0=\max\{k\in\NN:\hat{\lambda}_k\leq 0\}$. We set $n_0=0$ if $\hat{\lambda}_k>0$ for all $k\in\NN$ and this is the case if $\xi\geq 0$ and $\xi\not\equiv 0$ or $\beta\not\equiv 0$. Then $\bar{H}_{n_0}=\{0\}$ and $\hat{H}_{n_0+1}=H^1(\Omega)$.
	
	Let $u\in\hat{H}_{n_0+1}$. Then
	\begin{align}\label{eq29}
		 \hat{\varphi}_{\lambda}(u)=&\frac{1}{2}\vartheta(u)+\frac{\mu}{2}||u||^2_2-\int_{\Omega}K_{\lambda}(z,u)dz\nonumber\\
		\geq&\frac{1}{2}\vartheta(u)-\lambda\int_{\Omega}G(z,u)dz-\int_{\Omega}F(z,u)dz\ (\mbox{see (\ref{eq5})})\nonumber\\
		 \geq&\frac{1}{2}\vartheta(u)-\frac{\epsilon}{2}(\lambda+1)||u||^2_2-c_7(\lambda||u||^q+||u||^r)\ \mbox{for some}\ c_7>0,
	\end{align}
see (\ref{eq27}) and (\ref{eq28}).
	
	Since $u\in\hat{H}_{n_0+1}$, from Proposition \ref{prop5}(a) and by choosing $\epsilon\in(0,1)$ small enough, we have
	\begin{equation}\label{eq30}
		\frac{1}{2}[\vartheta(u)-\epsilon(\lambda+1)||u||^2_2]\geq c_8||u||^2\ \mbox{for some}\ c_8>0.
	\end{equation}
	
	We use (\ref{eq30}) in (\ref{eq29}). Then
	\begin{equation}\label{eq31}
		 \hat{\varphi}_{\lambda}(u)\geq\left[c_8-c_7(\lambda||u||^{q-2}+||u||^{r-2})\right]||u||^2.
	\end{equation}
	
	Let $\Im_{\lambda}(t)=\lambda t^{q-2}+t^{r-2},\ t>0$. We have $1<q<2<r$. Hence
	$$\Im_{\lambda}(t)\rightarrow+\infty\ \mbox{as}\ t\rightarrow 0^+\ \mbox{and as}\ t\rightarrow+\infty.$$
	
	So, we can find $t_0>0$ such that
	\begin{eqnarray}\label{eq32}
	 \Im_{\lambda}(t_0)=\min\limits_{\RR_+}\Im,
		&\Rightarrow&\Im'_{\lambda}(t_0)=0,\nonumber\\
		&\Rightarrow&\lambda(2-q)=(r-2)t^{r-q}_0,\nonumber\\
		&\Rightarrow&t_0=\left[\frac{\lambda(2-q)}{r-2}\right]^{\frac{1}{r-q}}.
	\end{eqnarray}
	
	Then we have
	\begin{eqnarray*}
		 &&\Im_{\lambda}(t_0)=\lambda\frac{(r-2)^{\frac{2-q}{r-q}}}{(\lambda(2-q))^{\frac{2-q}{r-q}}}+\left(\frac{\lambda(2-q)}{r-2}\right)^{\frac{r-2}{r-q}},\\
		&\Rightarrow&\Im_{\lambda}(t_0)\rightarrow 0\ \mbox{as}\ \lambda\rightarrow 0^+\ (\mbox{since}\ \frac{2-q}{r-q}<1).
	\end{eqnarray*}
	
	Therefore, we can find $\lambda_0>0$ such that
	$$\Im_{\lambda}(t_0)<\frac{c_8}{c_7}\ \mbox{for all}\ \lambda\in(0,\lambda_0).$$
	
	Returning to (\ref{eq31}), we deduce that there exists a positive number $m_\lambda$ such that
	\begin{equation}\label{eq33}
		\hat{\varphi}_{\lambda}(u)\geq m_{\lambda}>0=\hat{\varphi}_{\lambda}(0)\ \mbox{for all}\ u\in\hat{H}_{n_0+1}\ \mbox{with}\ ||u||=\hat{\rho}_{\lambda}=t_0(\lambda).
	\end{equation}
	
	On the other hand, hypotheses $H(f)(i),(ii)$ imply that if $\tau>\mu+\hat{\lambda}_{n_0+1}$, then we can find $c_9=c_9(\tau)>0$ such that
	\begin{equation}\label{eq34}
		F(z,x)\geq\frac{\tau}{2}x^2-c_9x^r\ \mbox{for almost all}\ z\in\Omega\ \mbox{and all}\ x\geq 0.
	\end{equation}
	
	Let $w_0\in E(\hat{\lambda}_{n_0+1})$ with $||w_0||=1$. We consider the space
	$$Y=\bar{H}_{n_0}\oplus\RR w_0.$$
	
	This is a finite dimensional subspace of $H^1(\Omega)$ and if $u\in Y$, then we can write $u$ in a unique way as
	$$u=\bar{u}+\alpha w_0\ \mbox{with}\ \bar{u}\in\bar{H}_{n_0}\ \mbox{and}\ \alpha\in\RR.$$
	
	Exploiting the orthogonality of the component spaces and since $G\geq 0$ (see hypothesis $H(g)(iii)$), we have
	\begin{eqnarray}\label{eq35} &&\hat{\varphi}_{\lambda}(u)\leq\frac{1}{2}\vartheta(\bar{u})+\frac{\alpha^2}{2}\vartheta(w_0)+\frac{\mu}{2}||\bar{u}||^2_2+\frac{\mu}{2}\alpha^2||w_0||^2_2\nonumber\\
		 &&\hspace{2cm}-\frac{\tau}{2}||\bar{u}||^2_2-\frac{\tau}{2}\alpha^2||w_0||^2_2+c_9||\bar{u}+\alpha w_0||^r_r\ \mbox{(see (\ref{eq34}))}.
	\end{eqnarray}
	
	Note that
	\begin{eqnarray}
		&&\vartheta(\bar{u})\leq\hat{\lambda}_{n_0}||\bar{u}||^2_2\leq 0\ (\mbox{see (\ref{eq4}) and recall that}\ \hat{\lambda}_{n_0}\leq 0),\label{eq36}\\
		&&\vartheta(w_0)=\hat{\lambda}_{n_0+1}||w_0||^2_2\ (\mbox{since}\ w_0\in E(\hat{\lambda}_{n_0+1})).\label{eq37}
	\end{eqnarray}
	
	Returning to (\ref{eq35}) and using (\ref{eq36}), (\ref{eq37}), we obtain
	\begin{align}\label{eq38}
		 \hat{\varphi}_{\lambda}(u)\leq&-\frac{\tau-\mu}{2}||\bar{u}||^2_2-\frac{\alpha^2}{2}\left[\tau-\mu-\hat{\lambda}_{n_0+1}\right]||w_0||^2_2+c_9||\bar{u}+\alpha w_0||^r_r\nonumber\\
		\leq&-c_{10}[||\bar{u}||^2_2+\alpha^2||w_0||^2_2]+c_9||\bar{u}+\alpha w_0||^r_r\ \mbox{with}\ c_8=\tau-\mu-\hat{\lambda}_{n_0+1}>0\nonumber\\
		=&-c_{10}||\bar{u}+\alpha w_0||^2_2+c_9||\bar{u}+\alpha w_0||^r_r\\
		&\mbox{(by the orthogonality of the component spaces)}.\nonumber
	\end{align}
	
	Since $Y$ is finite dimensional, all norms are equivalent. So, by (\ref{eq38}) we have
	\begin{equation}\label{eq39}
		\hat{\varphi}_{\lambda}(u)\leq c_{11}||\bar{u}+\alpha w_0||^r-c_{12}||\bar{u}+\alpha w_0||^2\ \mbox{for some}\ c_{11},c_{12}>0.
	\end{equation}
	
	But $r>2$. So,  it follows from (\ref{eq39}) that we can find $\rho\in(0,1)$ small such that
	\begin{equation}\label{eq40}
		\hat{\varphi}_{\lambda}(u)\leq 0\ \mbox{for all}\ u\in Y\ \mbox{with}\ ||u||\leq\rho.
	\end{equation}
	
	By (\ref{eq32}) and (\ref{eq33}) we see that
	$$\hat{\rho}_{\lambda}\rightarrow 0^+\ \mbox{as}\ \lambda\rightarrow 0^+$$
	
	Therefore we can find $\lambda_1\leq\lambda_0$ such that
	$$\hat{\rho}_{\lambda}\in(0,\rho)\ \mbox{for all}\ \lambda\in(0,\lambda_1).$$
	
	We consider the following sets:
	\begin{eqnarray*}
		&&E_0=\{u=\bar{u}+\alpha w_0:\bar{u}\in\bar{H}_{n_0},\alpha\in[0,1]\ \mbox{and}\ (||u||=\rho,\alpha\in[0,1])\ \mbox{or}\\
		&&\hspace{1cm}(||u||\leq\rho,\alpha\in\{0,1\})\},\\
		&&E=\{u=\bar{u}+\alpha w_0:\bar{u}\in\bar{H}_{n_0},||u||\leq\rho,\alpha\in[0,1]\},\\
		&&D=\hat{H}_{n_0+1}\cap\partial B_{\rho_{\lambda}}.
	\end{eqnarray*}
	
	From Gasinski and Papageorgiou \cite[p. 643]{6}  we know that
	\begin{equation}\label{eq41}
		\{E_0,E\}\ \mbox{links with}\ D\ \mbox{in}\ H^1(\Omega)
	\end{equation}
(see Definition \ref{def1} and recall that $1>\rho>\hat{\rho}_{\lambda}$).

By Proposition \ref{prop6} we know that for all $\lambda>0$
\begin{equation}\label{eq42}
	\hat{\varphi}_{\lambda}\ \mbox{satisfies the C-condition}.
\end{equation}

On account of (\ref{eq33}), (\ref{eq40}), (\ref{eq41}), (\ref{eq42}), we can apply Definition \ref{def1} (the linking theorem) and find $u\in H^1(\Omega)$ such that
\begin{equation}\label{eq43}
	u_0\in K_{\hat{\varphi}_{\lambda}},\ \hat{\varphi}_{\lambda}(0)=0<m_{\lambda}<\hat{\varphi}_{\lambda}(u_0),
\end{equation}
where $m_\lambda$ is the same as in relation \eqref{eq33}.

It follows from (\ref{eq43}) that $u_0\neq 0$ and
	\begin{equation}\label{eq44}
		\left\langle A(u_0),h\right\rangle+\int_{\Omega}(\xi(z)+\mu)u_0hdz+\int_{\partial\Omega}\beta(z)u_0hd\sigma=\int_{\Omega}k_{\lambda}(z,u_0)hdz\ \mbox{for all}\ h\in H^1(\Omega).
	\end{equation}
	
	In (\ref{eq44}) we choose $h=-u^-_0\in H^1(\Omega)$. Then
	\begin{eqnarray*}
		 \vartheta(u^-_0)+\mu||u^-_0||^2_2=0\ (\mbox{see (\ref{eq5})}),
		&\Rightarrow&c_0||u^-_0||^2\leq 0\ (\mbox{see (\ref{eq2})}),\\
		&\Rightarrow&u_0\geq 0,\ u_0\neq 0.
	\end{eqnarray*}
	
	So, equation (\ref{eq44}) becomes
	\begin{eqnarray}\label{eq45}
		&&\left\langle A(u_0),h\right\rangle+\int_{\Omega}\xi(z)u_0hdz+\int_{\partial\Omega}\beta(z)u_0hd\sigma=\int_{\Omega}[\lambda g(z,u_0)+f(z,u_0)]hdz\nonumber\\
		&&\mbox{for all}\ h\in H^1(\Omega),\nonumber\\
		&\Rightarrow&-\Delta u_0(z)+\xi(z)u_0(z)=\lambda g(z,u_0(z))+f(z,u_0(z))\ \mbox{for almost all}\ z\in\Omega,\nonumber\\
		&&\frac{\partial u_0}{\partial n}+\beta(z)u=0\ \mbox{on}\ \partial\Omega
	\end{eqnarray}
	(see Papageorgiou and R\u adulescu \cite{11}).
	
	We set
	$$e_{\lambda}(z,x)=\lambda g(z,x)+f(z,x)\ \mbox{and}\ \hat{a}_{\lambda}(z)=\frac{e_{\lambda}(z,u_0(z))}{1+u_0(z)}\,.$$
	
	Hypotheses $H(g)(i),(ii)$ and $H(f)(i)$ imply that
	\begin{equation}\label{eq46}
		|e_{\lambda}(z,x)|\leq c_{13}(1+x^{r-1})\ \mbox{for almost all}\ z\in\Omega,\ \mbox{all}\ x\geq 0,\ \mbox{some}\ c_{13}=c_{13}(\lambda)>0.
	\end{equation}
	
	Then we have
	\begin{align*}
		|\hat{a}_{\lambda}(z)|=&\frac{|e_{\lambda}(z,u_0(z))|}{1+u_0(z)}\\
		\leq&\frac{c_{13}(1+u_0(z)^{r-1})}{1+u_0(z)}\ (\mbox{see (\ref{eq46})})\\
		\leq&\frac{c_{14}(1+u_0(z))^{r-1}}{1+u_0(z)}\ \mbox{for some}\ c_{14}=c_{14}(\lambda)>0\\
		=&c_{14}(1+u_0(z))^{r-2}\ \mbox{for almost all}\ z\in\Omega,\\
		\Rightarrow\hat{a}_{\lambda}\in&L^{\tau}(\Omega)\ \mbox{with}\ \tau>\frac{N}{2}
	\end{align*}
	(note that $(r-2)\frac{N}{2}<\left(\frac{2N}{N-2}-2\right)\frac{N}{2}=\frac{2N}{N-2}=2^*$ if $N\geq 3$).
	
	We rewrite (\ref{eq45}) as
	\begin{eqnarray*}
		&&-\Delta u_0(z)+\xi(z)u_0(z)=\hat{a}_{\lambda}(z)(1+u_0(z))\ \mbox{for almost all}\ z\in\Omega,\\
		&&\frac{\partial u_0}{\partial n}+\beta(z)u_0=0\ \mbox{on}\ \partial\Omega\,.
	\end{eqnarray*}
	
	Using Lemma 5.1 of Wang \cite{16} we have
	$$u_0\in L^{\infty}(\Omega)\ (\mbox{see hypothesis}\ H(\xi)).$$
	
	Then the Calderon-Zygmund estimates (see Wang \cite[Lemma 5.2]{16}) imply that
	$$u_0\in C_+\backslash\{0\}.$$
	
	Let $\rho=||u_0||_{\infty}$. On account of hypotheses $H(g)(v)$ and $H(f)(iv)$, we can find $\bar{\xi}_{\rho}>0$ such that for almost all $z\in\Omega$, $x\mapsto \lambda g(z,x)+f(z,x)+\bar{\xi}_{\rho}x$ is nondecreasing on $[0,\rho]$. Then from (\ref{eq45}) we have
	\begin{eqnarray*}
		&&\Delta u_0(z)\leq(\bar{\xi}_{\rho}+\xi(z))u_0(z)\ \mbox{for almost all}\ z\in\Omega,\\
		&\Rightarrow&\Delta u_0(z)\leq(\bar{\xi}_{\rho}+||\xi^+||_{\infty})u_0(z)\ \mbox{for almost all}\ z\in\Omega\ (\mbox{see hypothesis}\ H(\xi)),\\
		&\Rightarrow&u_0\in D_+\ (\mbox{by the strong maximum principle}).
	\end{eqnarray*}
	
	Therefore we have proved that for $\lambda>0$ small enough, we have
	\begin{equation}\label{eq47}
		\lambda\in\mathcal{L}\ \mbox{and for every}\ \lambda\in\mathcal{L},\ S^{\lambda}_{+}\subseteq D_+\,.
	\end{equation}
	
	Next, let $\lambda\in\mathcal{L}$ and pick $\tau\in (0,\lambda)$. Since $\lambda\in\mathcal{L}$, we can find $u_{\lambda}\in S^{\lambda}_+\subseteq D_+$ (see (\ref{eq47})). We have
	\begin{align}\label{eq48}
		-\Delta u_{\lambda}(z)+\xi(z)u_{\lambda}(z)=&\lambda g(z,u_{\lambda}(z))+f(z,u_{\lambda}(z))\nonumber\\
		\geq&\tau g(z,u_{\lambda}(z))+f(z,u_{\lambda}(z))\ \mbox{for almost all}\ z\in\Omega\\
		&(\mbox{since}\ g\geq 0,\ \mbox{see hypothesis}\ H(g)(iii)).\nonumber
	\end{align}
	
	We consider the following truncation of the Carath\'eodory map $k_{\tau}(z,\cdot)$ (see (\ref{eq5}))
	\begin{equation}\label{eq49}
		\hat{k}_{\tau}(z,x)=\left\{\begin{array}{ll}
			k_{\tau}(z,x)&\mbox{if}\ x\leq u_{\lambda}(z)\\
			k_{\tau}(z,u_{\lambda}(z))&\mbox{if}\ u_{\lambda}(z)<x.
		\end{array}\right.
	\end{equation}
	
	We set $\hat{K}_{\tau}(z,x)=\int^x_0k_{\tau}(z,s)ds$ and consider the $C^1$-functional $\hat{\psi}_{\tau}:H^1(\Omega)\rightarrow\RR$ defined by
	 $$\hat{\psi}_{\tau}(u)=\frac{1}{2}\vartheta(u)+\frac{\mu}{2}||u||^2_2-\int_{\Omega}\hat{K}_{\tau}(z,u)dz\ \mbox{for all}\ u\in H^1(\Omega).$$
	
	By (\ref{eq49}) and (\ref{eq2}) it is clear that $\hat{\psi}_{\tau}(\cdot)$ is coercive. In addition, the Sobolev embedding theorem and the compactness of the trace map, imply that $\hat{\psi}_{\tau}(\cdot)$ is sequentially weakly lower semicontinuous. So, by the Weierstrass theorem, we can find $u_{\tau}\in H^1(\Omega)$ such that
	\begin{equation}\label{eq50}
		\hat{\psi}_{\tau}(u_{\tau})=\inf[\hat{\psi}_{\tau}(u):u\in H^1(\Omega)]=\hat{m}_{\tau}.
	\end{equation}
	
	With $\delta_0>0$ as in hypothesis $H(f)(iii)$, we define
	$$\hat{\delta}_0=\min\left\{\min\limits_{\overline{\Omega}}u_{\lambda},\delta_0\right\}>0\ (\mbox{recall that}\ u_{\lambda}\in D_+).$$
	
	For $u\in D_+$, choose $t\in(0,1)$ so small that
	$$tu(z)\in(0,\hat{\delta}_0]\ \mbox{for all}\ z\in\overline{\Omega}.$$
	
	Then using hypothesis $H(f)(iii)$, we have
	\begin{align}\label{eq51}
		\hat{\psi}_{\tau}(tu)\leq&\frac{t^2}{2}\vartheta(u)-\tau\int_{\Omega}G(z,tu)dz\ (\mbox{see (\ref{eq49}) and (\ref{eq5})})\nonumber\\
		\leq&\frac{t^2}{2}\vartheta(u)-\tau\frac{c_3}{q}t^q||u||^q_q\ (\mbox{see hypothesis}\ H(g)(iii)).
	\end{align}
	
	Recall that $q<2$. Then from (\ref{eq51}) and by choosing $t\in(0,1)$ even smaller if necessary, we infer that
	\begin{eqnarray*}
		 \hat{\psi}_{\tau}(tu)<0,
		&\Rightarrow&\hat{\psi}_{\tau}(u_{\tau})<0=\hat{\psi}_{\tau}(0)\ (\mbox{see (\ref{eq50})}),\\
		&\Rightarrow&u_{\tau}\neq 0.
	\end{eqnarray*}
	
	By (\ref{eq50}) we have
	\begin{align}\label{eq52}
		&\hat{\psi}'_{\tau}(u_{\tau})=0,\nonumber\\
		\Rightarrow&\left\langle A(u_{\tau}),h\right\rangle+\int_{\Omega}(\xi(z)+\mu)u_{\tau}hdz+\int_{\partial\Omega}\beta(z)u_{\tau}hd\sigma=\int_{\Omega}\hat{k}_{\tau}(z,u_{\tau})hdz\\
		&\mbox{for all}\ h\in H^1(\Omega).\nonumber
	\end{align}
	
	In (\ref{eq52}) we choose $h=-u^-_{\tau}\in H^1(\Omega)$. Then
	\begin{eqnarray*}
		 \vartheta(u^-_{\tau})+\mu||u^-_{\tau}||^2_2=0\ (\mbox{see (\ref{eq49}) and (\ref{eq5})}),
		&\Rightarrow&c_0||u^-_{\tau}||^2\leq 0\ (\mbox{see (\ref{eq2})}),\\
		&\Rightarrow&u_{\tau}\geq 0,\ u_{\tau}\neq 0.
	\end{eqnarray*}
	
	Next in (\ref{eq52}) we choose $h=(u_{\tau}-u_{\lambda})^+\in H^1(\Omega)$. Then
	\begin{align*}
		&\left\langle A(u_{\tau}),(u_{\tau}-u_{\lambda})^+\right\rangle+\int_{\Omega}(\xi(z)+\mu)u_{\tau}(u_{\tau}-u_{\lambda})^+dz+\int_{\partial\Omega}\beta(z)u_{\tau}(u_{\tau}-u_{\lambda})^+d\sigma\\
=&\int_{\Omega}[\tau g(z,u_{\lambda})+f(z,u_{\lambda})+\mu u_{\lambda}](u_{\tau}-u_{\lambda})^+dz\ (\mbox{see (\ref{eq49}) and (\ref{eq5})}),\\
		\leq&\left\langle A(u_{\lambda}),(u_{\tau}-u_{\lambda})^+\right\rangle+\int_{\Omega}(\xi(z)+\mu)u_{\lambda}(u_{\tau}-u_{\lambda})^+dz+\int_{\partial\Omega}\beta(z)u_{\lambda}(u_{\tau}-u_{\lambda})^+d\sigma
	\end{align*}
	(see (\ref{eq48}) and use Green's identity, see Gasinski and Papageorgiou \cite[p. 210]{6}),
	\begin{eqnarray*}
		 &\Rightarrow&\vartheta((u_{\tau}-u_{\lambda})^+)+\mu||(u_{\tau}-u_{\lambda})^+||^2_2\leq 0,\\
		&\Rightarrow&c_0||(u_{\tau}-u_{\lambda})^+||^2\leq 0,\\
		&\Rightarrow&u_{\tau}\leq u_{\lambda}.
	\end{eqnarray*}
	
	So, we have proved that
	\begin{eqnarray*}
		&&u_{\tau}\in[0,u_{\lambda}]=\{u\in H^1(\Omega):0\leq u(z)\leq u_{\lambda}(z)\ \mbox{for almost all}\ z\in\Omega\},\\
		&\Rightarrow&u_{\tau}\in S^{\tau}_+\ (\mbox{see (\ref{eq49})}),\\
		&\Rightarrow&\tau\in\mathcal{L}\ \mbox{and so}\ \left(0,\lambda\right]\subseteq\mathcal{L}.
	\end{eqnarray*}
\end{proof}

An interesting byproduct of the above proof is the following corollary.
\begin{corollary}\label{cor8}
	If hypotheses $H(\xi),H(\beta),H(g),H(f),H_0$ hold, $\tau,\lambda\in\mathcal{L}$ with $0<\tau<\lambda$ and $u_{\lambda}\in S^{\lambda}_+$, then we can find $u_{\tau}\in S^{\tau}_+$ such that $u_{\lambda}-u_{\tau}\in D_+$.
\end{corollary}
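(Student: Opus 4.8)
The plan is to revisit the construction carried out in the proof of Proposition \ref{prop7} and extract from it the strong comparison $u_\lambda - u_\tau \in D_+$. Recall that given $u_\lambda \in S^\lambda_+ \subseteq D_+$ and $\tau \in (0,\lambda)$, we built the truncated functional $\hat\psi_\tau$ using $\hat k_\tau$ from \eqref{eq49}, minimized it to obtain $u_\tau \in H^1(\Omega)$, and showed $u_\tau \in [0,u_\lambda]$, $u_\tau \neq 0$, and $u_\tau \in S^\tau_+ \subseteq D_+$. Thus both $u_\lambda, u_\tau \in D_+$ and $0 \le u_\tau(z) \le u_\lambda(z)$ on $\overline\Omega$; it remains to upgrade the weak inequality $u_\tau \le u_\lambda$ to $u_\lambda - u_\tau \in D_+$, i.e. strict positivity of the difference on all of $\overline\Omega$.

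The key step is a differential inequality for $u_\lambda - u_\tau$ combined with the strong maximum principle. Set $\rho = \|u_\lambda\|_\infty$ and invoke hypotheses $H(g)(v)$ and $H(f)(iv)$ to obtain $\bar\xi_\rho > 0$ such that $x \mapsto \tau g(z,x) + f(z,x) + \bar\xi_\rho x$ is nondecreasing on $[0,\rho]$ for almost all $z \in \Omega$. Since $0 \le u_\tau(z) \le u_\lambda(z) \le \rho$, monotonicity gives $\tau g(z,u_\lambda) + f(z,u_\lambda) + \bar\xi_\rho u_\lambda \ge \tau g(z,u_\tau) + f(z,u_\tau) + \bar\xi_\rho u_\tau$ a.e. Using that $u_\lambda$ solves $(P_\lambda)$ (hence the inequality \eqref{eq48}: $-\Delta u_\lambda + \xi u_\lambda \ge \tau g(z,u_\lambda) + f(z,u_\lambda)$) and that $u_\tau$ solves $(P_\tau)$, subtracting yields
\begin{equation*}
-\Delta(u_\lambda - u_\tau) + \big(\xi(z) + \bar\xi_\rho\big)(u_\lambda - u_\tau) \ge 0 \quad \text{for almost all } z \in \Omega,
\end{equation*}
together with the Robin boundary condition $\partial(u_\lambda - u_\tau)/\partial n + \beta(z)(u_\lambda - u_\tau) = 0$ on $\partial\Omega$. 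Replacing $\xi(z)$ by $\|\xi^+\|_\infty$ (legitimate since $u_\lambda - u_\tau \ge 0$ and using $H(\xi)$, as in the final lines of the proof of Proposition \ref{prop7}), we get $-\Delta(u_\lambda - u_\tau) + (\|\xi^+\|_\infty + \bar\xi_\rho)(u_\lambda - u_\tau) \ge 0$ with $u_\lambda - u_\tau \ge 0$, $u_\lambda - u_\tau \ne 0$ (indeed $u_\lambda \in S^\lambda_+$ while $u_\tau$ need not coincide with it; more carefully, if $u_\lambda = u_\tau$ then $u_\lambda$ would solve both problems, forcing $\lambda g = \tau g$ a.e. on the range of $u_\lambda$, impossible since $g \ge c_3 x^{q-1} > 0$ for $x > 0$ by $H(g)(iii)$ and $u_\lambda \in D_+$). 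The strong maximum principle for the Robin problem (as used repeatedly above, see Papageorgiou and R\u adulescu \cite{11, 14}) then gives $u_\lambda - u_\tau \in D_+$.

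The only delicate point is verifying $u_\lambda \not\equiv u_\tau$ rigorously and making sure the regularity $u_\lambda - u_\tau \in C^1(\overline\Omega)$ is in place before applying the maximum principle: the former follows from the strict positivity $g(z,x) > 0$ for $x>0$ as noted, and the latter is immediate since both $u_\lambda, u_\tau \in C_+ \subseteq C^1(\overline\Omega)$ by the regularity already established (Lemma 5.1 and Lemma 5.2 of Wang \cite{16}). Everything else is a direct transcription of computations already performed in the proof of Proposition \ref{prop7}.
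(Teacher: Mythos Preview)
Your proof is correct and follows essentially the same route as the paper: recover $u_\tau\in[0,u_\lambda]\cap S^\tau_+$ from the construction in Proposition~\ref{prop7}, then use the monotonicity hypotheses $H(g)(v),H(f)(iv)$ together with $g\ge 0$ to obtain a differential inequality for $u_\lambda-u_\tau$ and conclude via the strong maximum principle. The only cosmetic difference is the order in which you drop $\lambda$ to $\tau$ and apply monotonicity (the paper uses $x\mapsto \lambda g(z,x)+f(z,x)+\bar\xi_\rho x$ nondecreasing and then $g\ge 0$, you use \eqref{eq48} first and then $x\mapsto \tau g(z,x)+f(z,x)+\bar\xi_\rho x$ nondecreasing), and you are in fact more explicit than the paper in justifying $u_\lambda\not\equiv u_\tau$, which the paper simply asserts as $u_\lambda-u_\tau\in C_+\setminus\{0\}$.
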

\begin{proof}
	An inspection of the last part of the proof of Proposition \ref{prop7} reveals that we can find $u_{\tau}\in S^{\tau}_+$ such that
	\begin{equation}\label{eq53}
		u_{\lambda}-u_{\tau}\in C_+\backslash\{0\}.
	\end{equation}
	
	Let $\rho=||u_{\lambda}||_{\infty}$ and let $\bar{\xi}_{\rho}>0$ be such that for almost all $z\in\Omega$ the function
	$$x\mapsto\lambda g(z,x)+f(z,x)+\bar{\xi}_{\rho}x$$
	is nondecreasing (see hypotheses $H(g)(v),H(f)(iv)$). We have
	\begin{align*}
		&-\Delta u_{\lambda}(z)+(\xi(z)+\bar{\xi}_{\rho})u_{\lambda}(z)\\
		=&\lambda g(z,u_{\lambda}(z))+f(z,u_{\lambda}(z))+\bar{\xi}_{\rho}u_{\lambda}(z)\\
		\geq&\lambda g(z,u_{\tau}(z))+f(z,u_{\tau}(z))+\bar{\xi}_{\rho}u_{\tau}(z)\ (\mbox{see (\ref{eq53})})\\
		\geq&\tau g(z,u_{\tau}(z))+f(z,u_{\tau}(z))+\bar{\xi}_{\rho}u_{\tau}(z)\ (\mbox{since}\ g\geq 0,\tau<\lambda)\\
		=&-\Delta u_{\tau}(z)+(\xi(z)+\bar{\xi}_{\rho})u_{\tau}(z)\ \mbox{for almost all}\ z\in\Omega\ (\mbox{recall that}\ u_{\tau}\in S^{\tau}_+)\\
		 \Rightarrow&\Delta(u_{\lambda}-u_{\tau})\leq(||\xi^+||_{\infty}+\bar{\xi}_{\rho})(u_{\lambda}-u_{\tau})(z)\ \mbox{for almost all}\ z\in\Omega\ (\mbox{see}\ H(\xi))\\
		\Rightarrow&u_{\lambda}-u_{\tau}\in D_+\ (\mbox{by the strong maximum principle}).
	\end{align*}
\end{proof}

Let $\lambda^*=\sup\mathcal{L}$.
\begin{proposition}\label{prop9}
	If hypotheses $H(\xi),H(\beta),H(g),H(f)$ and $H_0$ hold, then $\lambda^*<\infty$.
\end{proposition}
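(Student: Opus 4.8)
The plan is to produce a threshold $\bar\lambda>0$ such that $(P_\lambda)$ has no positive solution for $\lambda\ge\bar\lambda$; then $\mathcal L\cap[\bar\lambda,\infty)=\emptyset$, whence $\lambda^*=\sup\mathcal L\le\bar\lambda<\infty$.

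\smallskip
\textbf{Step 1 (a uniform linear minorant of the reaction for large $\lambda$).} I would first show that there are $\tilde c>\max\{\hat\lambda_1,0\}$ and $\bar\lambda>0$ with
$$\lambda g(z,x)+f(z,x)\ge\tilde c\,x\quad\text{for a.a. }z\in\Omega,\ \text{all }x\ge0,\ \text{all }\lambda\ge\bar\lambda.$$
Take $\tilde c=\hat\lambda_1^++1$. By $H(f)(ii)$ fix $M>\delta_0$ with $f(z,x)\ge\tilde c\,x$ for a.a.\ $z$ and all $x\ge M$, and by $H(f)(i)$ set $c_{15}=\|a\|_\infty(1+M^{r-1})$, so $f(z,x)\ge-c_{15}$ on $[\delta_0,M]$. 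Using $H(g)(iii)$ (hence $g(z,x)\ge c_3x^{q-1}\ge0$) and $H(f)(iii)$ (hence $f\ge0$ on $[0,\delta_0]$), I split $[0,\infty)$: on $[M,\infty)$, $\lambda g+f\ge f\ge\tilde c x$; on $[\delta_0,M]$, $\lambda g+f\ge\lambda c_3\delta_0^{q-1}-c_{15}\ge\tilde c M\ge\tilde c x$ provided $\lambda\ge(\tilde c M+c_{15})/(c_3\delta_0^{q-1})$; on $(0,\delta_0]$, $\lambda g+f\ge\lambda c_3x^{q-1}=\lambda c_3x^{q-2}x\ge\lambda c_3\delta_0^{q-2}x\ge\tilde c x$ provided $\lambda\ge\tilde c\delta_0^{2-q}/c_3$ (here $x^{q-2}$ is decreasing on $(0,\delta_0]$ because $q<2$). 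Taking $\bar\lambda$ to be the maximum of the two displayed thresholds gives the claim.

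\smallskip
\textbf{Step 2 (eigenfunction test).} Suppose $\lambda\ge\bar\lambda$ and $\lambda\in\mathcal L$, and pick $u\in S^\lambda_+$; by Proposition \ref{prop7}, $u\in D_+$. Let $\hat u_1\in D_+$ be the positive principal eigenfunction of \eqref{eq1} associated with $\hat\lambda_1$ (it belongs to $D_+$ because $\xi^+\in L^\infty(\Omega)$ by $H(\xi)$). Testing the weak formulation of $(P_\lambda)$ with $\hat u_1$, the weak formulation of the eigenvalue problem \eqref{eq1} with $u$, and using the symmetry of the bilinear form $(v,w)\mapsto\langle A(v),w\rangle+\int_\Omega\xi vw\,dz+\int_{\partial\Omega}\beta vw\,d\sigma$, I obtain
$$\hat\lambda_1\int_\Omega u\hat u_1\,dz=\int_\Omega\big[\lambda g(z,u)+f(z,u)\big]\hat u_1\,dz\ \ge\ \tilde c\int_\Omega u\hat u_1\,dz.$$
Since $u,\hat u_1\in D_+$ we have $\int_\Omega u\hat u_1\,dz>0$, and $\tilde c>\hat\lambda_1$, a contradiction. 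Hence no $\lambda\ge\bar\lambda$ lies in $\mathcal L$, and therefore $\lambda^*\le\bar\lambda<\infty$.

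\smallskip
The main obstacle is Step 1: since $f$ is only known to be nonnegative near $0$ and (via superlinearity) eventually positive, it may be negative on a bounded middle range and cannot simply be discarded. The resolution is that on any interval $[\delta_0,M]$ the concave term $\lambda c_3 x^{q-1}$ is bounded below by a positive constant times $\lambda$, so a large $\lambda$ absorbs both the possible negative dip of $f$ and the target slope $\tilde c$; near the origin one instead uses $q<2$, so that $\lambda c_3 x^{q-1}$ dominates $\tilde c x$ on $(0,\delta_0]$ once $\lambda$ is large. The remaining ingredients — the symmetric-form identity and the positivity of $\int_\Omega u\hat u_1\,dz$ — are routine once $S^\lambda_+\subseteq D_+$ and $\hat u_1\in D_+$ are available.
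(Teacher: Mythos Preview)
Your proof is correct and shares the same Step 1 as the paper: both establish a linear minorant $\lambda g(z,x)+f(z,x)\ge c\,x$ for all $x\ge0$ once $\lambda$ is large, with the paper taking $c=\hat\lambda_1$ and you taking $c=\tilde c>\hat\lambda_1$. Your case analysis on $[0,\delta_0]$, $[\delta_0,M]$, $[M,\infty)$ in fact fills in details the paper leaves to the reader.

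The genuine difference is Step 2. The paper uses a sliding argument: pick the largest $\eta>0$ with $\eta\hat u_1\le u$, then from the minorant and the strong maximum principle deduce $u-\eta\hat u_1\in D_+$, contradicting maximality of $\eta$. You instead test the weak formulation of $(P_\lambda)$ with $\hat u_1$ and the eigenvalue equation with $u$, use symmetry of the bilinear form to get $\hat\lambda_1\int_\Omega u\hat u_1\,dz\ge\tilde c\int_\Omega u\hat u_1\,dz$, and conclude from $\tilde c>\hat\lambda_1$ and $\int_\Omega u\hat u_1\,dz>0$. Your route is more elementary---it avoids invoking the strong maximum principle---at the modest price of needing the minorant with a slope strictly above $\hat\lambda_1$, which your Step 1 already provides. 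The paper's approach, on the other hand, is the one that generalizes more readily to nonlinear operators (e.g.\ the $p$-Laplacian) where symmetry of the bilinear form is unavailable.
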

\begin{proof}
	Hypotheses $H(g)(iii)$ and $H(f)(ii),(iii)$ imply that we can find $\hat{\lambda}>0$ so big that
	\begin{equation}\label{eq54}
		\hat{\lambda}g(z,x)+f(z,x)\geq\hat{\lambda}_1x\ \mbox{for almost all}\ z\in\Omega\ \mbox{and all}\ x\geq 0.
	\end{equation}
	
	Let $\lambda>\hat{\lambda}$ and assume that $\lambda\in\mathcal{L}$. Then according to Proposition \ref{prop7} we can find $u\in S^{\lambda}_+\subseteq D_+$. Then there exists $\eta>0$ such that $\eta\hat{u}_1\leq u$. We choose the biggest such $\eta>0$. We have
	\begin{align*}
		-\Delta u(z)+\xi(z)u(z)=&\lambda g(z,u(z))+f(z,u(z))\\
		\geq&\hat{\lambda}g(z,u(z))+f(z,u(z))\ (\mbox{since}\ g\geq 0,\ \lambda>\hat{\lambda})\\
		\geq&\hat{\lambda}_1u(z)\ (\mbox{see (\ref{eq54})})\\
		\geq&\hat{\lambda}_1(\eta\hat{u}_1)(z)\\
		=&-\Delta(\eta\hat{u}_1)(z)+\xi(z)(\eta\hat{u}_1)(z)\ \mbox{for almost all}\ z\in\Omega,\\
		\Rightarrow\Delta(u-\eta\hat{u}_1)(z)\leq&||\xi^+||_{\infty}(u-\eta\hat{u}_1)(z)\ \mbox{for almost all}\ z\in\Omega\ (\mbox{see}\ H(\xi))\\
		\Rightarrow u-\eta\hat{u}_1\in D_+&(\mbox{by the strong maximum principle}).
	\end{align*}
	
	But this contradicts the maximality of $\eta>0$. So $\lambda\not\in\mathcal{L}$ and we have
	$$\lambda^*\leq\hat{\lambda}<\infty\,.$$
\end{proof}
\begin{proposition}\label{prop10}
	If hypotheses $H(\xi),H(\beta),H(g),H(f),H_0$ hold and $\lambda\in(0,\lambda^*)$, then problem \eqref{eqp} admits at least two positive solutions
	$$u_{\lambda},\hat{u}_{\lambda}\in D_+\ \mbox{and}\ \hat{u}_{\lambda}-u_{\lambda}\in D_+.$$
\end{proposition}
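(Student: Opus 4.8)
The plan is the classical two–solutions scheme for concave--convex problems: first construct a ``small'' positive solution $u_{\lambda}\in D_+$ lying below a supersolution coming from a strictly larger value of the parameter, then obtain a second, ``larger'' positive solution $\hat{u}_{\lambda}$ by a mountain pass argument, and finally upgrade the ordering $\hat{u}_{\lambda}\geq u_{\lambda}$ to $\hat{u}_{\lambda}-u_{\lambda}\in D_+$ via the strong maximum principle. Since $\lambda<\lambda^*=\sup\mathcal{L}$, I would first fix $\eta\in(\lambda,\lambda^*)\cap\mathcal{L}$ and $u_{\eta}\in S^{\eta}_{+}\subseteq D_+$; because $\eta>\lambda$ and $g\geq 0$ (hypothesis $H(g)(iii)$), $u_{\eta}$ is a supersolution of \eqref{eqp}. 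Truncating the reaction $k_{\lambda}(z,\cdot)$ of $\hat{\varphi}_{\lambda}$ from above at $u_{\eta}(z)$ yields a coercive, sequentially weakly lower semicontinuous $C^1$-functional on $H^1(\Omega)$, hence possessing a global minimizer $u_{\lambda}$. Arguing exactly as in the proof of Proposition~\ref{prop7} (test with $-u^-_{\lambda}$ and with $(u_{\lambda}-u_{\eta})^+$, and use $H(g)(iii)$ with $q<2$ to make the truncated functional negative at small positive multiples of an element of $D_+$) one gets $0\leq u_{\lambda}\leq u_{\eta}$, $u_{\lambda}\neq 0$, and $u_{\lambda}\in S^{\lambda}_{+}\subseteq D_+$. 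Moreover $v:=u_{\eta}-u_{\lambda}\geq 0$ is not identically zero -- otherwise $u_{\eta}$ would solve \eqref{eqp}, forcing $(\eta-\lambda)g(\cdot,u_{\eta})\equiv 0$, which is impossible since $g(\cdot,u_{\eta})\geq c_3 u_{\eta}^{q-1}>0$ on $\overline{\Omega}$ -- and, using the monotonicity hypotheses $H(g)(v)$, $H(f)(iv)$ as in the proof of Corollary~\ref{cor8}, one obtains $\Delta v\leq(||\xi^+||_{\infty}+\bar\xi_{\rho})v$ with $\rho=||u_{\eta}||_{\infty}$; hence $u_{\eta}-u_{\lambda}\in D_+$ by the strong maximum principle.

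The next and main step is to show that $u_{\lambda}$ is a local minimizer of the one-sided truncation $\hat{\varphi}_{\lambda}^{+}$ of $\hat{\varphi}_{\lambda}$ obtained by freezing $k_{\lambda}(z,\cdot)$ at $k_{\lambda}(z,u_{\lambda}(z))$ for $x\leq u_{\lambda}(z)$; testing with $(u_{\lambda}-u)^+$ shows that every critical point $u$ of $\hat{\varphi}_{\lambda}^{+}$ satisfies $u\geq u_{\lambda}$ and solves \eqref{eqp}, so $u\in S^{\lambda}_{+}\subseteq D_+$, while $0\notin K_{\hat{\varphi}_{\lambda}^{+}}$. Truncating $\hat{\varphi}_{\lambda}^{+}$ in addition from above at $u_{\eta}$ yields a coercive functional whose global minimizer $w$ satisfies, by the usual comparison arguments, $w\in[u_{\lambda},u_{\eta}]$ and $w\in S^{\lambda}_{+}\subseteq D_+$. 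If $w\neq u_{\lambda}$, then $w-u_{\lambda}\geq 0$ is not identically zero and the strong maximum principle (exactly as in the previous step) gives $w-u_{\lambda}\in D_+$, and we are done with $\hat{u}_{\lambda}:=w$. Otherwise $u_{\lambda}=w$ is the global minimizer of this doubly truncated functional, which coincides with $\hat{\varphi}_{\lambda}^{+}$ on $\{u:u\leq u_{\eta}\}$; since $u_{\lambda}\in D_+$ and $u_{\eta}-u_{\lambda}\in D_+$, we have $u_{\lambda}+h\leq u_{\eta}$ for every $h\in C^1(\overline{\Omega})$ of sufficiently small norm, whence $\hat{\varphi}_{\lambda}^{+}(u_{\lambda}+h)\geq\hat{\varphi}_{\lambda}^{+}(u_{\lambda})$. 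Thus $u_{\lambda}$ is a local $C^1(\overline{\Omega})$-minimizer of $\hat{\varphi}_{\lambda}^{+}$, and by Proposition~\ref{prop4} a local $H^1(\Omega)$-minimizer.

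Finally I would run a mountain pass argument on $\hat{\varphi}_{\lambda}^{+}$. One checks that $\hat{\varphi}_{\lambda}^{+}$ satisfies the C-condition by the argument of Proposition~\ref{prop6}, the truncation modifying only bounded quantities and leaving the superlinear behaviour of $f$ near $+\infty$ (which drives that proof) unchanged. If $u_{\lambda}$ is not an isolated point of $K_{\hat{\varphi}_{\lambda}^{+}}$, then a sequence of distinct critical points converging to $u_{\lambda}$ yields, via the strong maximum principle again, a second solution strictly above $u_{\lambda}$ and we are done; so assume $u_{\lambda}$ is isolated. Being an isolated local minimizer of $\hat{\varphi}_{\lambda}^{+}$, it then satisfies $\hat{\varphi}_{\lambda}^{+}(u_{\lambda})<\inf\{\hat{\varphi}_{\lambda}^{+}(u):||u-u_{\lambda}||=\rho\}=:m_{\rho}$ for some $\rho\in(0,1)$ (see, e.g., \cite{6}). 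On the other hand, taking $v=\hat{u}_1\in D_+$ (recall $H(\xi)$), hypothesis $H(f)(ii)$ gives $\hat{\varphi}_{\lambda}^{+}(tv)\to-\infty$ as $t\to+\infty$, so we may fix $t_0>0$ with $||t_0v-u_{\lambda}||>\rho$ and $\hat{\varphi}_{\lambda}^{+}(t_0v)<m_{\rho}$. Theorem~\ref{th3}, applied to $\hat{\varphi}_{\lambda}^{+}$ with $u_0=u_{\lambda}$ and $u_1=t_0v$, produces $\hat{u}_{\lambda}\in K_{\hat{\varphi}_{\lambda}^{+}}$ with $\hat{\varphi}_{\lambda}^{+}(\hat{u}_{\lambda})\geq m_{\rho}>\hat{\varphi}_{\lambda}^{+}(u_{\lambda})$; hence $\hat{u}_{\lambda}\neq u_{\lambda}$, $\hat{u}_{\lambda}\in S^{\lambda}_{+}\subseteq D_+$, $\hat{u}_{\lambda}\geq u_{\lambda}$ and $\hat{u}_{\lambda}\not\equiv u_{\lambda}$, so the strong maximum principle (via $H(g)(v)$, $H(f)(iv)$) finally gives $\hat{u}_{\lambda}-u_{\lambda}\in D_+$.

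The crux is the local--minimizer step of the second paragraph: it is the only place where $\lambda<\lambda^*$ (rather than merely $\lambda\in\mathcal{L}$) is genuinely used, through the supersolution $u_{\eta}$ coming from a strictly larger parameter, and it is what places $u_{\lambda}$ in the $C^1(\overline{\Omega})$-interior of $[0,u_{\eta}]$ and thereby makes the mountain-pass geometry available; the remaining ingredients are routine truncation, regularity and comparison.
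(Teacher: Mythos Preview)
Your proposal is correct and follows essentially the same route as the paper's proof: fix $u_{\eta}\in S^{\eta}_{+}$ with $\eta\in(\lambda,\lambda^*)$ as a supersolution, obtain $u_{\lambda}\in[0,u_{\eta}]\cap D_+$ by minimizing the upper-truncated functional, introduce the one-sided (from below at $u_{\lambda}$) truncation $\hat{\varphi}_{\lambda}^{+}$ (the paper's $\psi^*_{\lambda}$), use the double truncation to show that $u_{\lambda}$ is a local $C^1(\overline{\Omega})$-minimizer---hence $H^1(\Omega)$-minimizer---of $\hat{\varphi}_{\lambda}^{+}$, and then run the mountain pass on $\hat{\varphi}_{\lambda}^{+}$ to produce $\hat{u}_{\lambda}$. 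The only cosmetic differences are notational ($\eta$ versus $\nu$, $\hat{\varphi}_{\lambda}^{+}$ versus $\psi^*_{\lambda}$) and the order in which you phrase the ``either a second solution already in $[u_{\lambda},u_{\eta}]$ or $u_{\lambda}$ is a local minimizer'' dichotomy; the substance matches the paper.
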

\begin{proof}
	Let $\nu\in(\lambda,\lambda^*)$ and let $u_{\nu}\in S^{\nu}_+\subseteq D_+$ (see Proposition \ref{prop7}). Then
	\begin{align}\label{eq55}
		-\Delta u_{\nu}(z)+\xi(z)u_{\nu}(z)=&\nu g(z,u_{\nu}(z))+f(z,u_{\nu}(z))\nonumber\\
		\geq&\lambda g(z,u_{\nu}(z))+f(z,u_{\nu}(z))\ \mbox{for almost all}\ z\in\Omega\\
		&(\mbox{since}\ g\geq 0\ \mbox{and}\ \nu>\lambda).\nonumber
	\end{align}
	
	Let $\hat{k}_{\lambda}(z,x)$ be the Carath\'eodory function defined in (\ref{eq49}), with $\tau$ replaced by $\lambda$ and $u_{\lambda}$ replaced by $u_{\nu}$. We set $\hat{K}_{\lambda}(z,x)=\int^x_0\hat{k}_{\lambda}(z,x)ds$ and consider the $C^1$-functional $\hat{\psi}_{\lambda}:H^1(\Omega)\rightarrow\RR$ defined by
	 $$\hat{\psi}_{\lambda}(u)=\frac{1}{2}\vartheta(u)+\frac{\mu}{2}||u||^2_2-\int_{\Omega}\hat{K}_{\lambda}(z,u)dz\ \mbox{for all}\ u\in H^1(\Omega).$$
	
	As in the proof of Proposition \ref{prop7}, via the Weierstrass theorem, we can find $u_{\lambda}\in H^1(\Omega)$ such that
	\begin{eqnarray*}
		 u_{\lambda}\in K_{\hat{\psi}_{\lambda}}\backslash\{0\}\subseteq[0,u_{\nu}]\cap D_+,
		&\Rightarrow&u_{\lambda}\in S^{\lambda}_+\subseteq D_+.
	\end{eqnarray*}
	
	Using this positive solution, we introduce the following truncation of $k_{\lambda}(z,\cdot)$ (see (\ref{eq5}))
	\begin{equation}\label{eq56}
		k^*_{\lambda}(z,x)=\left\{\begin{array}{ll}
			k_{\lambda}(z,u_{\lambda}(z))&\mbox{if}\ x<u_{\lambda}(z)\\
			k_{\lambda}(z,x)&\mbox{if}\ u_{\lambda}(z)\leq x.
		\end{array}\right.
	\end{equation}
	
	This is a Carath\'eodory function. We set $K^*_{\lambda}(z,x)=\int^x_0k^*_{\lambda}(z,s)ds$ and consider the $C^1$-functional $\psi^*_{\lambda}:H^1(\Omega)\rightarrow\RR$ defined by
	 $$\psi^*_{\lambda}(u)=\frac{1}{2}\vartheta(u)+\frac{\mu}{2}||u||^2_2-\int_{\Omega}K^*_{\lambda}(z,u)dz\ \mbox{for all}\ u\in H^1(\Omega).$$
	
	As before, using (\ref{eq56}) we can verify that
	\begin{equation}\label{eq57}
		K_{\psi^*_{\lambda}}\subseteq\left[u_{\lambda}\right)\cap D_+=\{u\in D_+:u_{\lambda}(z)\leq u(z)\ \mbox{for all}\ z\in\overline{\Omega}\}.
	\end{equation}
	
	On account of (\ref{eq57}) we see that we may assume that
	\begin{equation}\label{eq58}
		K_{\psi^*_{\lambda}}\cap[0,u_{\nu}]=\{u_{\lambda}\}.
	\end{equation}
	
	Indeed, if (\ref{eq58}) is not true, then we have $\hat{u}_{\lambda}\in K_{\psi^*_{\lambda}}\cap[0,u_{\nu}]$, $\hat{u}_{\lambda}-u_{\lambda}\in C_+\backslash\{0\}$, which is a second positive solution of \eqref{eqp} (see (\ref{eq56}), (\ref{eq57})). Moreover, as before, using hypotheses $H(g)(v),H(f)(iv)$ and the strong maximum principle, we have $\hat{u}_{\lambda}-u_{\lambda}\in D_+$ and so we are done.
	
	We introduce the following truncation of $k^*_{\lambda}(z,\cdot)$:
	\begin{equation}\label{eq59}
		\hat{k}^*_{\lambda}(z,x)=\left\{\begin{array}{ll}
			k^*_{\lambda}(z,x)&\mbox{if}\ x<u_{\nu}(z)\\
			k^*_{\lambda}(z,u_{\nu}(z))&\mbox{if}\ u_{\nu}(z)\leq x.
		\end{array}\right.
	\end{equation}
	
	This is a Carath\'eodory function. We set $\hat{K}^*_{\lambda}(z,x)=\int^x_0\hat{k}^*_{\lambda}(z,s)ds$ and consider the $C^1$-functional $\hat{\psi}^*_{\lambda}:H^1(\Omega)\rightarrow\RR$ defined by
	 $$\hat{\psi}^*_{\lambda}(u)=\frac{1}{2}\vartheta(u)+\frac{\mu}{2}||u||^2_2-\int_{\Omega}\hat{k}^*_{\lambda}(z,u)dz\ \mbox{for all}\ u\in H^1(\Omega).$$
	
	As in the proof of Proposition \ref{prop7} we see that
	\begin{equation}\label{eq60}
		K_{\hat{\psi}^*_{\lambda}}\subseteq[u_{\lambda},u_{\nu}]\cap D_+\ (\mbox{see (\ref{eq57})}).
	\end{equation}
	
	By (\ref{eq2}) and (\ref{eq59}) it is clear that $\hat{\psi}^*_{\lambda}$ is coercive. Also, it is sequentially  weakly lower semicontinuous. So, we can find $u^*_{\lambda}\in H^1(\Omega)$ such that
	\begin{eqnarray*}
		 \hat{\psi}^*_{\lambda}(u^*_{\lambda})=\inf[\hat{\psi}^*_{\lambda}(u):u\in H^1(\Omega)],
		&\Rightarrow&u^*_{\lambda}\in[u_{\lambda},u_{\nu}]\cap D_+\ (\mbox{see (\ref{eq60})}).
	\end{eqnarray*}
	
	Note that $\left.(\hat{\psi}^*_{\lambda})'\right|_{[0,u_{\nu}]}=\left.(\psi^*_{\lambda})'\right|_{[0,u_{\nu}]}$ (see (\ref{eq56}), (\ref{eq59})). Therefore
	\begin{eqnarray*}
		 u^*_{\lambda}\in K_{\psi^*_{\lambda}},
		&\Rightarrow&u^*_{\lambda}=u_{\lambda}\ (\mbox{see (\ref{eq58})}).
	\end{eqnarray*}
	
	Moreover, reasoning as in the proof of Corollary \ref{cor8}, we show that
	\begin{eqnarray*}
		&&u_{\nu}-u_{\lambda}\in D_+,\ u_{\lambda}\in D_+,\\
		&\Rightarrow&u_{\lambda}\ \mbox{is a local}\ C^1(\overline{\Omega})-\mbox{minimizer of}\ \psi^*_{\lambda},\\
		&\Rightarrow&u_{\lambda}\ \mbox{is a local}\ H^1(\Omega)-\mbox{minimizer of}\ \psi^*_{\lambda}\ (\mbox{see Proposition \ref{prop4}}).
	\end{eqnarray*}
	
	We can assume that $K_{\psi^*_{\lambda}}$ is finite (otherwise on account of (\ref{eq57}) we see that we already have an infinity of positive smooth solutions strictly bigger than $u_{\lambda}$).
	
	Since $K_{\psi^*_{\lambda}}$ is finite, we can find $\rho\in(0,1)$ small such that
	\begin{equation}\label{eq61}
		 \psi^*_{\lambda}(u_{\lambda})<\inf[\psi^*_{\lambda}(u):||u-u_{\lambda}||=\rho]=m^{\lambda}_{\rho}
	\end{equation}
	(see Aizicovici, Papageorgiou and Staicu \cite{1}, proof of Proposition 29).
	
	Due to hypothesis $H(f)(ii)$ and since $G\geq 0$, we have
	\begin{equation}\label{eq62}
		\psi^*_{\lambda}(t\hat{u}_1)\rightarrow-\infty\ \mbox{as}\ t\rightarrow+\infty\,.
	\end{equation}
	
	Since $k^*_{\lambda}(z,\cdot)$ and $k_{\lambda}(z,\cdot)$ coincide on $\left[u_{\lambda}(z)\right)=\{x\in\RR_+:u_{\lambda}(z)\leq x\}$, we infer that
	\begin{equation}\label{eq63}
		\psi^*_{\lambda}\ \mbox{satisfies the C-condition}
	\end{equation}
	(see the proof of Proposition \ref{prop6}).
	
	Then (\ref{eq61}), (\ref{eq62}), (\ref{eq63}) permit the use of Theorem \ref{th3} (the mountain pass theorem). So, there is $\hat{u}_{\lambda}\in H^1(\Omega)$ such that
	\begin{eqnarray*}
		&&\hat{u}_{\lambda}\in K_{\psi^*_{\lambda}}\ \mbox{and}\ m^{\lambda}_{\rho}\leq \psi^*_{\lambda}(\hat{u}_{\lambda}),\\
		&\Rightarrow&u_{\lambda}\leq\hat{u}_{\lambda}\ \mbox{and}\ \hat{u}_{\lambda}\neq u_{\lambda}\ (\mbox{see (\ref{eq57}) and (\ref{eq61})}).
	\end{eqnarray*}
	
	Moreover, as in the proof of Corollary \ref{cor8}, using hypotheses $H(g)(v)$ and $H(f)(iv)$ and the strong maximum principle, we obtain
	$$\hat{u}_{\lambda}-u_{\lambda}\in D_+.$$
\end{proof}
\begin{proposition}\label{prop11}
	If hypotheses $H(\xi),H(\beta),H(g),H(f)$ and $H_0$ hold, then $\lambda^*\in\mathcal{L}$.
\end{proposition}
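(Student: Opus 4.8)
The plan is to show that $\lambda^* \in \mathcal{L}$, i.e., that problem \eqref{eqp} admits a positive solution at the extremal parameter value. The natural strategy is a limiting argument: pick an increasing sequence $\lambda_n \uparrow \lambda^*$ with $\lambda_n \in \mathcal{L}$, choose corresponding positive solutions $u_n \in S^{\lambda_n}_+ \subseteq D_+$, and pass to the limit. To make the limit procedure work one must select the $u_n$ carefully — the most convenient choice is to take $u_n$ to be a solution obtained from the variational (Weierstrass minimization) construction used in Propositions \ref{prop7} and \ref{prop10}, so that $\hat{\psi}_{\lambda_n}(u_n) < 0$, or equivalently that a suitable truncated energy functional is negative at $u_n$. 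Using Corollary \ref{cor8} one can moreover arrange the sequence $\{u_n\}$ to be increasing (each $u_{n}$ dominates $u_{n-1}$), which gives pointwise monotone convergence for free.

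First I would establish that $\{u_n\}_{n\geq 1}$ is bounded in $H^1(\Omega)$. This is the crucial step and I expect it to be the main obstacle. The argument should parallel the proof of the C-condition in Proposition \ref{prop6}: from the equation tested with $u_n$ one gets $\vartheta(u_n) = \int_\Omega [\lambda_n g(z,u_n) + f(z,u_n)]u_n\, dz$, and from $\hat\varphi_{\lambda_n}(u_n) < 0$ (or $\leq 0$) one gets $\vartheta(u_n) \leq \int_\Omega 2[\lambda_n G(z,u_n) + F(z,u_n)]\, dz$; subtracting yields a uniform bound $\int_\Omega \gamma_{\lambda_n}(z,u_n)\, dz \leq M$. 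If $\|u_n\| \to \infty$, set $y_n = u_n/\|u_n\|$; on the set $\{y > 0\}$ the superlinearity hypothesis $H(f)(ii)$ combined with Fatou's lemma forces $\int_\Omega F(z,u_n)/\|u_n\|^2\,dz \to +\infty$, contradicting the energy bound, so $y \equiv 0$; then the standard rescaling trick (choosing $t_n \in [0,1]$ maximizing $\hat\varphi_{\lambda_n}(t u_n)$, using hypothesis $H_0$ to control $\int_\Omega \gamma_{\lambda_n}(z,t_n u_n)\,dz$) gives $\hat\varphi_{\lambda_n}(t_n u_n) \to +\infty$ while simultaneously $\hat\varphi_{\lambda_n}(t_n u_n)$ is bounded above, again a contradiction. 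One must be slightly careful that the constants in these estimates can be taken uniform in $\lambda_n$, which is fine since $\lambda_n \leq \lambda^* < \infty$ by Proposition \ref{prop9}.

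Once boundedness is in hand, pass to a subsequence so that $u_n \stackrel{w}{\to} u_*$ in $H^1(\Omega)$ and $u_n \to u_*$ in $L^{2s'}(\Omega)$ and in $L^2(\partial\Omega)$ (and, by monotonicity, $u_n(z) \uparrow u_*(z)$ a.e.). Testing the weak formulation for $u_n$ with $h = u_n - u_*$, using the growth bound \eqref{eq46} on $\lambda_n g + f$ and the compact embeddings, gives $\langle A(u_n), u_n - u_*\rangle \to 0$, hence $\|Du_n\|_2 \to \|Du_*\|_2$ and, via the Kadec--Klee property, $u_n \to u_*$ strongly in $H^1(\Omega)$. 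Passing to the limit in \eqref{eq45} (written for $\lambda_n$, $u_n$) then shows that $u_*$ solves \eqref{eqp} with $\lambda = \lambda^*$. It remains to check $u_* \neq 0$: this is where the lower bound $\hat\varphi_{\lambda_n}(u_n) \leq 0 = \hat\varphi_{\lambda_n}(0)$ together with the concave term is used — more precisely, arguing as in Proposition \ref{prop7} with a test element $t\hat u_1$ one shows $\hat\varphi_{\lambda_n}(u_n) \leq \hat\varphi_{\lambda_n}(t\hat u_1) \leq -c < 0$ for a constant $c$ independent of $n$ (the concave contribution $\tau \frac{c_3}{q}t^q\|\hat u_1\|_q^q$ dominates near zero and can be bounded below uniformly since $\lambda_n$ stays away from $0$), and this negativity is inherited in the limit, so $\varphi_{\lambda^*}(u_*) \leq -c < 0 = \varphi_{\lambda^*}(0)$, forcing $u_* \neq 0$. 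Finally, the regularity and strong maximum principle argument (via $H(\xi)$, $H(g)(v)$, $H(f)(iv)$, and Wang's estimates) used already in Proposition \ref{prop7} upgrades $u_*$ to $D_+$, completing the proof that $\lambda^* \in \mathcal{L}$.
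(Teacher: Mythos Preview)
Your overall strategy matches the paper's: choose $\lambda_n \uparrow \lambda^*$, take $u_n \in S^{\lambda_n}_+$ coming from the Weierstrass minimization in the second half of Proposition~\ref{prop7} so that $\hat{\varphi}_{\lambda_n}(u_n) < 0$, combine this with the equation tested against $u_n$ to get $\int_\Omega \gamma_{\lambda_n}(z,u_n)\,dz < 0$, run the Claim of Proposition~\ref{prop6} to obtain boundedness (with constants uniform since $\lambda_n \leq \lambda^*$), and pass to the limit via Kadec--Klee to produce a nonnegative solution $u^*$ of $(P_{\lambda^*})$. All of this is correct and essentially identical to the paper.

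The gap is in your argument that $u^* \neq 0$. You write $\hat{\varphi}_{\lambda_n}(u_n) \leq \hat{\varphi}_{\lambda_n}(t\hat{u}_1) \leq -c$, but $u_n$ is the global minimizer of the \emph{truncated} functional $\hat{\psi}_{\lambda_n}$ (truncated above by some $v_n \in S^{\lambda'_n}_+$ with $\lambda'_n \in (\lambda_n,\lambda^*)$), not of $\hat{\varphi}_{\lambda_n}$. The first inequality therefore reads $\hat{\varphi}_{\lambda_n}(u_n) = \hat{\psi}_{\lambda_n}(u_n) \leq \hat{\psi}_{\lambda_n}(t\hat{u}_1)$, and identifying the right-hand side with $\hat{\varphi}_{\lambda_n}(t\hat{u}_1)$ requires $t\hat{u}_1 \leq v_n$. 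Since $\lambda'_n \to \lambda^*$, you have no a priori uniform positive lower bound on the truncation levels $v_n$, so you cannot fix a single $t > 0$ working for all $n$; if the $v_n$ were to shrink, the minima $\hat{\psi}_{\lambda_n}(u_n)$ could drift to $0$. (Your side remark about arranging $\{u_n\}$ increasing via Corollary~\ref{cor8} does not rescue this and is in any case stated in the wrong direction: Corollary~\ref{cor8} produces $u_\tau$ below a given $u_\lambda$ for $\tau < \lambda$, so it builds chains that decrease as the parameter decreases, not chains that increase as $\lambda_n \uparrow \lambda^*$.)

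The paper handles $u^* \neq 0$ by a different device that you are missing. From $H(g)(iii)$ and $H(f)(i),(ii),(iii)$ one has $\lambda g(z,x) + f(z,x) > \lambda c_3 x^{q-1} - c_{15}x$ for all $x \geq 0$, and the paper then solves the auxiliary Robin problem
\[
-\Delta u + \xi(z)u = \lambda_1 c_3 u^{q-1} - c_{15}u \quad \text{in } \Omega,\qquad \frac{\partial u}{\partial n} + \beta(z)u = 0 \quad \text{on } \partial\Omega,
\]
obtaining a solution $\bar{u} \in D_+$ by direct minimization. A comparison argument (take the largest $\eta_n > 0$ with $\eta_n \bar{u} \leq u_n$; if $\eta_n < 1$ the strong maximum principle gives $u_n - \eta_n \bar{u} \in D_+$, contradicting maximality of $\eta_n$) then yields $\bar{u} \leq u_n$ for every $n$, hence $\bar{u} \leq u^*$ and $u^* \neq 0$. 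This barrier construction is precisely what closes the gap, and it is independent of how the truncation levels behave.
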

\begin{proof}
	Let $\{\lambda_n\}_{n\geq 1}\subseteq(0,\lambda^*)$ such that $\lambda_n\uparrow\lambda^*$. As in the second half of the proof of Proposition \ref{prop7} (see the part of that proof after (\ref{eq47})), we can find $\{u_n\}_{n\geq 1}\subseteq D_+$ such that
	\begin{align}
		&\left\langle A(u_n),h\right\rangle+\int_{\Omega}\xi(z)u_nhdz+\int_{\partial\Omega}\beta(z)u_nhd\sigma=\int_{\Omega}[\lambda_ng(z,u_n)+f(z,u_n)]hdz\label{eq64}\\
		&\mbox{for all}\ h\in H^1(\Omega)\ \mbox{and all}\ n\in\NN,\nonumber\\
		&\hat{\varphi}_{\lambda_n}(u_n)<0\ \mbox{for all}\ n\in\NN\,.\label{eq65}
	\end{align}
	
	In (\ref{eq64}) we choose $h=u_n\in H^1(\Omega)$. Then
	\begin{equation}\label{eq66}
		\vartheta(u_n)=\int_{\Omega}[\lambda_ng(z,u_n)+f(z,u_n)]u_ndz\ \mbox{for all}\ n\in\NN\,.
	\end{equation}
	
	By (\ref{eq65}) we have
	\begin{equation}\label{eq67}
		\vartheta(u_n)-2\int_{\Omega}[\lambda_n G(z,u_n)+F(z,u_n)]dz<0\ \mbox{for all}\ n\in\NN\ (\mbox{see (\ref{eq5})}).
	\end{equation}
	
	It follows from (\ref{eq66}) and (\ref{eq67}) that
	\begin{eqnarray}\label{eq68}
		&&\int_{\Omega}\gamma_{\lambda_n}(z,u_n)dz<0\ \mbox{for all}\ n\in\NN,\nonumber\\
		&\Rightarrow&\int_{\Omega}\gamma_{\lambda^*}(z,u_n)dz<0\ \mbox{for all}\ n\in\NN\ (\mbox{see hypothesis}\ H(g)(iv)).
	\end{eqnarray}
	
	Then reasoning as in the proof of Proposition \ref{prop6} (see the claim) and applying (\ref{eq68}), we show that $\{u_n\}_{n\geq 1}\subseteq H^1(\Omega)$ is bounded. So, we may assume that
	\begin{equation}\label{eq69}
		u_n\stackrel{w}{\rightarrow}u^*\ \mbox{in}\ H^1(\Omega)\ \mbox{and}\ u_n\rightarrow u^*\ \mbox{in}\ L^{2s'}(\Omega)\ \mbox{and in}\ L^2(\partial\Omega).
	\end{equation}
	
	In (\ref{eq64}) we choose $h=u_n-u^*\in H^1(\Omega)$, pass to the limit as $n\rightarrow\infty$ and use (\ref{eq69}). Then
	\begin{eqnarray}\label{eq70}
		&&\lim\limits_{n\rightarrow\infty}\left\langle A(u_n),u_n-u^*\right\rangle=0,\nonumber\\
		&\Rightarrow&u_n\rightarrow u^*\ \mbox{in}\ H^1(\Omega)\ (\mbox{by the Kadec-Klee property, see (\ref{eq69})}).
	\end{eqnarray}
	
	So, if in (\ref{eq64}) we pass to the limit as $n\rightarrow\infty$ and use (\ref{eq70}), we infer that
	$$u^*\in C_+\ \mbox{is a nonnegative solution of}\ (P_{\lambda^*}).$$
	
	If we show that $u^*\neq 0$, then we are finished. To this end, note that we can find $c_{15}>0$ such that
	\begin{equation}\label{eq71}
		\lambda g(z,x)+f(z,x)>\lambda c_3x^{q-1}-c_{15}x\ \mbox{for almost all}\ z\in\Omega\ \mbox{and all}\ x\geq 0
	\end{equation}
	(see hypothesis $H(g)(iii)$ and hypotheses $H(f)(i),(ii),(iii)$). Let $\lambda=\lambda_1\leq \lambda_n$ for all $n\in\NN$ and consider the following auxiliary Robin problem
	\begin{equation}\label{eq72}
		\left\{\begin{array}{ll}
			-\Delta u(z)+\xi(z)u(z)=\lambda_1c_3u(z)^{q-1}-c_{12}u(z)&\mbox{in}\ \Omega,\\
			\dfrac{\partial u}{\partial n}+\beta(z)u=0&\mbox{on}\ \partial\Omega,\ u>0.
		\end{array}\right\}
	\end{equation}
	
	Let $d:H^1(\Omega)\rightarrow\RR$ be the $C^1$-functional defined by
	 $$d(u)=\frac{1}{2}\vartheta(u)+\frac{\mu}{2}||u^-||^2_2+\frac{c_{15}}{2}||u^+||^2_2-\frac{\lambda_1c_3}{q}||u^+||^q_q\ \mbox{for all}\ u\in H^1(\Omega).$$
	
	Using (\ref{eq2}) and the fact that $q<2$, we infer that $d(\cdot)$ is coercive. Also, it is sequentially weakly lower semicontinuous. So, we can find $\bar{u}\in H^1(\Omega)$ such that
	\begin{equation}\label{eq73}
		d(\bar{u})=\inf[d(u):u\in H^1(\Omega)].
	\end{equation}
	
	Since $q<2$, for $t\in(0,1)$ small enough, we have
	\begin{eqnarray*}
		 d(t\hat{u}_1)<0,
		&\Rightarrow&d(\bar{u})<0=d(0)\ (\mbox{see (\ref{eq73})}),\\
		&\Rightarrow&\bar{u}\neq 0.
	\end{eqnarray*}
	
	By (\ref{eq73}) we have
	\begin{eqnarray}\label{eq74}
		&&d'(\bar{u})=0,\nonumber\\
		&\Rightarrow&\left\langle A(\bar{u}),h\right\rangle+\int_{\Omega}\xi(z)\bar{u}hdz+\int_{\partial\Omega}\beta(z)\bar{u}hd\sigma-\int_{\Omega}\mu\bar{u}^-hdz\nonumber\\
		&&=\int_{\Omega}[\lambda_1c_3(\bar{u}^+)^{q-1}-c_{15}(\bar{u}^+)]hdz\ \mbox{for all}\ h\in H^1(\Omega).
	\end{eqnarray}
	
	In (\ref{eq74}) we choose $h=-\bar{u}^-\in H^1(\Omega)$. Then
	\begin{eqnarray*}
		 \vartheta(\bar{u}^-)+\mu||\bar{u}^-||^2_2=0,
		&\Rightarrow&c_0||\bar{u}^-||^2\leq 0\ (\mbox{see (\ref{eq2})}),\\
		&\Rightarrow&\bar{u}\geq 0,\ \bar{u}\neq 0.
	\end{eqnarray*}
	
	Then (\ref{eq74}) becomes
	\begin{eqnarray*}
		&&\left\langle A(\bar{u}),h\right\rangle+\int_{\Omega}\xi(z)\bar{u}hdz+\int_{\partial\Omega}\beta(z)\bar{u}hd\sigma=\int_{\Omega}[\lambda_1c_3\bar{u}^{q-1}-c_{15}\bar{u}]hdz\\
		&&\mbox{for all}\ h\in H^1(\Omega),\\
		 &\Rightarrow&-\Delta\bar{u}(z)+\xi(z)\bar{u}(z)=\lambda_1c_3\bar{u}(z)^{q-1}-c_{15}\bar{u}(z)\ \mbox{for almost all}\ z\in\Omega,\\
		&&\frac{\partial \bar{u}}{\partial n}+\beta(z)\bar{u}=0\ \mbox{on}\ \partial\Omega\ (\mbox{see Papageorgiou and R\u adulescu \cite{11}})\\
		&\Rightarrow&\bar{u}\ \mbox{is a positive solution of (\ref{eq72})}.
	\end{eqnarray*}
	
	Moreover, using the regularity results of Wang \cite{16} and the strong maximum principle, we have
	$$\bar{u}\in D_+.$$
	
	Recall that $u_n\in D_+$ for all $n\in\NN$. So, we can find $\eta_n>0$ such that $\eta_n\bar{u}\leq u_n$. We choose $\eta_n$ to be the biggest such positive real and suppose that $\eta_n\in(0,1)$. Also, let $\xi^*_n>c_{15}>0$. Then
	\begin{align}\label{eq75}
		&-\Delta(\eta_n\bar{u})+(\xi(z)+\xi^*_n)(\eta_n\bar{u})\nonumber\\
		=&\eta_n[-\Delta\bar{u}+(\xi(z)+\xi^*_n)\bar{u}]\nonumber\\
		=&\eta_n[\lambda_1c_3\bar{u}^{q-1}+(\xi^*_n-c_{15})\bar{u}]\ (\mbox{see (\ref{eq72})})\nonumber\\
		\leq&\lambda_1c_3(\eta_n\bar{u})^{q-1}+(\xi^*_n-c_{15})(\eta_n\bar{u})\ (\mbox{recall that}\ \eta_n\in(0,1)\ \mbox{while}\ q<2)\nonumber\\
		\leq&\lambda_1c_3u_n^{q-1}+(\xi^*_n-c_{15})u_n\ (\mbox{recall that}\ \eta_n\bar{u}\leq u_n\ \mbox{and}\ \xi^*_n-c_{15}>0)\nonumber\\
		<&\lambda_ng(z,u_n)+f(z,u_n)+\xi^*_nu_n\ (\mbox{see (\ref{eq71}) and recall that}\ \lambda_1\leq\lambda_n\ \mbox{for all}\ n\in\NN)\nonumber\\
		=&-\Delta u_n+(\xi(z)+\xi^*_n)u_n\ (\mbox{since}\ u_n\in S^{\lambda_n}_+),\nonumber\\
		 \Rightarrow&\Delta(u_n-\eta_n\bar{u})\leq(||\xi^+||_{\infty}+\xi^*_n)(u_n-\eta_n\bar{u})\ (\mbox{see hypothesis}\ H(\xi)).
	\end{align}
	
	Evidently, $u_n\neq\eta_n\bar{u}$. So, from (\ref{eq75}) and the strong maximum principle, we infer that
	$$u_n-\eta_n\bar{u}\in D_+,$$
	which contradicts the maximality of $\eta_n$. Hence $\eta_n\geq 1$ and so
	\begin{eqnarray*}
		&&\bar{u}\leq u_n\ \mbox{for all}\ n\in\NN,\\
		&\Rightarrow&\bar{u}\leq u^*\ (\mbox{see (\ref{eq70})}),\\
		&\Rightarrow&u^*\neq 0\ \mbox{and so}\ u^*\in S^{\lambda^*}_+\subseteq D_+,\ \mbox{thus}\ \lambda^*\in\mathcal{L}.
	\end{eqnarray*}
\end{proof}

 This proposition implies that
$$\mathcal{L}=\left(0,\lambda^*\right].$$

\section{Extremal positive solutions - bifurcation theorem}

In this section,  we first show that for every $\lambda\in(0,\lambda^*)$ problem \eqref{eqp} has a smallest positive solution $\tilde{u}_{\lambda}\in D_+$ and determine the monotonicity and continuity properties of the map $\lambda\mapsto\tilde{u}_{\lambda}.$
\begin{proposition}\label{prop12}
	If hypotheses $H(\xi),H(\beta),H(g),H(f)$ and $H_0$ hold, then for every $\lambda\in(0,\lambda^*)$, problem \eqref{eqp} has a smallest positive solution $\tilde{u}_{\lambda}\in D_+$ and the map $\lambda\mapsto\tilde{u}_{\lambda}$ is strictly increasing in the sense that
	$$\tau<\lambda\Rightarrow\tilde{u}_{\lambda}-\tilde{u}_{\tau}\in D_+$$
	and it is left continuous from $(0,\lambda^*)$ into $C^1(\overline{\Omega})$.
\end{proposition}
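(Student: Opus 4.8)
The plan is to argue in three steps: first produce the smallest positive solution $\tilde u_\lambda$ for fixed $\lambda\in(0,\lambda^*)$, then prove strict monotonicity of $\lambda\mapsto\tilde u_\lambda$, and finally its left continuity into $C^1(\overline\Omega)$.

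For the existence of $\tilde u_\lambda$ I would first show that $S^\lambda_+$ is \emph{downward directed}: given $u_1,u_2\in S^\lambda_+$, set $m=\min\{u_1,u_2\}\in H^1(\Omega)$, truncate $k_\lambda(z,\cdot)$ from above at $m(z)$ (exactly as in \eqref{eq49}, with $u_\lambda$ replaced by $m$), and minimize the corresponding functional, which is coercive by \eqref{eq2} and sequentially weakly lower semicontinuous. The minimizer is nonzero (evaluate at $t\hat u_1$ with $t$ small, using $H(g)(iii)$ as in \eqref{eq51}), nonnegative (test with its negative part), and satisfies $u\le m$ (test with $(u-m)^+$ and use that $m$, being the minimum of two solutions of \eqref{eqp} with $g\ge0$, is a supersolution, as in the comparison step of Proposition \ref{prop7}); hence $u\in S^\lambda_+$ with $u\le u_1$, $u\le u_2$. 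Then, by the standard lemma on downward directed families (Hu and Papageorgiou), I extract a decreasing sequence $\{u_n\}\subseteq S^\lambda_+$ with $u_n\downarrow\tilde u_\lambda$ pointwise, where $\tilde u_\lambda(z)=\inf\{u(z):u\in S^\lambda_+\}$. Since $0\le u_n\le u_1\in L^\infty(\Omega)$ and $\xi\in L^s(\Omega)$ with $s>N$, the right-hand side $\lambda g(\cdot,u_n)+f(\cdot,u_n)-\xi u_n$ of the equation satisfied by $u_n$ is bounded in $L^s(\Omega)$, so the regularity theory of Wang \cite{16} yields a uniform bound in $C^{1,\alpha}(\overline\Omega)$; by compact embedding a subsequence, hence (having a pointwise limit) the whole sequence, converges in $C^1(\overline\Omega)$ to $\tilde u_\lambda$, and passing to the limit in the weak formulation shows $\tilde u_\lambda$ is a nonnegative solution of \eqref{eqp}. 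To see $\tilde u_\lambda\ne0$ I reuse the auxiliary problem \eqref{eq72}: its positive solution $\bar u\in D_+$ satisfies $\bar u\le u$ for every $u\in S^\lambda_+$ by the comparison argument of \eqref{eq75}, so $\bar u\le\tilde u_\lambda$; therefore $\tilde u_\lambda\in S^\lambda_+$, and since $\tilde u_\lambda$ is the pointwise infimum it is the smallest positive solution, with $\tilde u_\lambda\in D_+$ by the strong maximum principle (using $H(\xi)$, $H(g)(v)$, $H(f)(iv)$ as in Proposition \ref{prop7}).

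For strict monotonicity, let $0<\tau<\lambda<\lambda^*$. Applying Corollary \ref{cor8} with $u_\lambda:=\tilde u_\lambda\in S^\lambda_+$, I obtain $u_\tau\in S^\tau_+$ with $\tilde u_\lambda-u_\tau\in D_+$. Since $\tilde u_\tau\le u_\tau$ by minimality, $\tilde u_\lambda-\tilde u_\tau\ge\tilde u_\lambda-u_\tau$, so $\tilde u_\lambda-\tilde u_\tau\in C^1(\overline\Omega)$ is strictly positive on $\overline\Omega$, i.e. $\tilde u_\lambda-\tilde u_\tau\in D_+$.

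For left continuity, fix $\lambda\in(0,\lambda^*)$ and $\lambda_n\uparrow\lambda$. By the monotonicity just proved, $\{\tilde u_{\lambda_n}\}$ is increasing with $0\le\tilde u_{\lambda_n}\le\tilde u_\lambda\in L^\infty(\Omega)$, so as above the equations \eqref{eq64} give a uniform $C^{1,\alpha}(\overline\Omega)$ bound; passing to a subsequence, $\tilde u_{\lambda_n}\to\tilde u$ in $C^1(\overline\Omega)$ with $\tilde u\le\tilde u_\lambda$, and the limit is a nonnegative solution of \eqref{eqp}. The common lower bound $\bar u$ associated with $\lambda_1\le\lambda_n$ (the positive solution of \eqref{eq72} with parameter $\lambda_1$) satisfies $\bar u\le\tilde u_{\lambda_n}$ for all $n$ by \eqref{eq75}, hence $\bar u\le\tilde u$, so $\tilde u\ne0$ and $\tilde u\in S^\lambda_+$; minimality forces $\tilde u_\lambda\le\tilde u\le\tilde u_\lambda$, so $\tilde u=\tilde u_\lambda$, and since this limit is independent of the subsequence the whole sequence converges, $\tilde u_{\lambda_n}\to\tilde u_\lambda$ in $C^1(\overline\Omega)$. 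I expect the main obstacle to be the first step — the downward directedness of $S^\lambda_+$ together with the extraction of a decreasing minimizing sequence whose pointwise infimum, via the uniform regularity estimate and the uniform lower bound $\bar u$, is again a solution and is the least one; the monotonicity and continuity assertions are then comparatively routine once Corollary \ref{cor8} and this regularity machinery are available.
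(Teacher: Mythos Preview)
Your proposal is correct and follows essentially the same approach as the paper. The paper proceeds identically: it invokes the downward directedness of $S^\lambda_+$ (citing Filippakis--Papageorgiou rather than spelling out the truncation at $\min\{u_1,u_2\}$ as you do), extracts a decreasing sequence via the Hu--Papageorgiou lemma, passes to the limit in the weak formulation, and uses the auxiliary lower barrier $\bar u$ from \eqref{eq72} to rule out the trivial limit; the monotonicity step is exactly your application of Corollary~\ref{cor8}; and for left continuity the paper also obtains a uniform $C^{1,\alpha}$ bound and uses compactness, concluding via a short contradiction with pointwise monotonicity rather than your sandwich argument $\tilde u_\lambda\le\tilde u\le\tilde u_\lambda$, but the two are equivalent.
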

\begin{proof}
	As in Filippakis and Papageorgiou \cite[Lemma 4.1]{5}, we have that $S^{\lambda}_+$ is downward directed (that is, if $u_1,u_2\in S^{\lambda}_+$, then we can find $u\in S^{\lambda}_+$ such that $u\leq u_1,u\leq u_2$). Invoking Lemma 3.10 of Hu and Papageorgiou \cite[p. 178]{7}, we can find a decreasing sequence $\{u_n\}_{n\geq 1}\subseteq S^{\lambda}_+$  such that
	$$\inf S^{\lambda}_+=\inf\limits_{n\geq 1}u_n.$$
	
	We may assume that
	\begin{equation}\label{eq76}
		u_n\stackrel{w}{\rightarrow}\tilde{u}_{\lambda}\ \mbox{in}\ H^1(\Omega)\ \mbox{and}\ u_n\rightarrow\tilde{u}_{\lambda}\ \mbox{in}\ L^{2s'}(\Omega)\ \mbox{and in}\ L^2(\partial\Omega).
	\end{equation}
	
	We have
	\begin{align}\label{eq77}
		&\left\langle A(u_n),h\right\rangle+\int_{\Omega}\xi(z)u_nhdz+\int_{\partial\Omega}\beta(z)u_nhd\sigma=\int_{\Omega}[\lambda g(z,u_n)+f(z,u_n)]hdz\nonumber\\
		&\mbox{for all}\ h\in H^1(\Omega),\nonumber\\
		\Rightarrow&\left\langle A(\tilde{u}_{\lambda}),h\right\rangle+\int_{\Omega}\xi(z)\tilde{u}_{\lambda}hdz+\int_{\partial\Omega}\beta(z)\tilde{u}_{\lambda}hd\sigma=\int_{\Omega}[\lambda g(z,\tilde{u}_{\lambda})+f(z,\tilde{u}_{\lambda})]hdz\\
		&\mbox{for all}\ h\in H^1(\Omega)\ (\mbox{see (\ref{eq76})}).\nonumber
	\end{align}
	
	Also, by the proof of Proposition \ref{prop11} and since $\lambda_1<\lambda$ (see equation (\ref{eq72})), we have
	\begin{eqnarray*}
		 \bar{u}\leq u_n\ \mbox{for all}\ n\in\NN,
		&\Rightarrow&\bar{u}\leq\tilde{u}_{\lambda}\ (\mbox{see (\ref{eq76})}),\\
		&\Rightarrow&\tilde{u}_{\lambda}\neq 0\ \mbox{and so}\ \tilde{u}_{\lambda}\in S^{\lambda}_+,\ \tilde{u}_{\lambda}=\inf S^{\lambda}_+.
	\end{eqnarray*}
	
	If $\tau<\lambda$, then by Corollary \ref{cor8} we can find $u_{\tau}\in S^{\tau}_{\lambda}$ such that
	\begin{eqnarray}\label{eq78}
		 \tilde{u}_{\lambda}-u_{\tau}\in D_+,
		&\Rightarrow&\tilde{u}_{\lambda}-\tilde{u}_{\tau}\in D_+,\nonumber\\
		&\Rightarrow&\lambda\mapsto\tilde{u}_{\lambda}\ \mbox{is strictly increasing}.
	\end{eqnarray}
	
	Finally, suppose that $\lambda_n\rightarrow \lambda^-\ (\lambda\in(0,\lambda^*))$. From the regularity theory (see Wang \cite{16}), we know that we can find $\alpha\in(0,1)$ and $c_{16}>0$ such that
	$$\tilde{u}_{\lambda_n}\in C^{1,\alpha}(\overline{\Omega}),\ ||\tilde{u}_{\lambda_n}||_{C^{1,\alpha}(\overline{\Omega})}\leq c_{16}\ \mbox{for all}\ n\in\NN.$$
	
	Exploiting the compact embedding of $C^{1,\alpha}(\overline{\Omega})$ into $C^1(\overline{\Omega})$ and by passing to a subsequence if necessary, we have that
	\begin{equation}\label{eq79}
		u_n\rightarrow\bar{u}_{\lambda}\ \mbox{in}\ C^1(\overline{\Omega}).
	\end{equation}
	
	Suppose that $\bar{u}_{\lambda}\neq\tilde{u}_{\lambda}$. Then we can find $z_0\in\Omega$ such that
	\begin{eqnarray*}
		 \tilde{u}_{\lambda}(z_0)<\bar{u}_{\lambda}(z_0),
		&\Rightarrow&\tilde{u}_{\lambda}(z_0)<\tilde{u}_{\lambda_n}(z_0)\ \mbox{for all}\ n\geq n_0\ (\mbox{see (\ref{eq79})}),
	\end{eqnarray*}
	which contradicts (\ref{eq78}) (recall that $\lambda_n\leq \lambda$ for all $n\in\NN$). Therefore by the Urysohn criterion,  we have for the original sequence
	\begin{eqnarray*}
		 \tilde{u}_{\lambda_n}\rightarrow\tilde{u}_{\lambda}\ \mbox{in}\ C^1(\overline{\Omega}),
		&\Rightarrow&\lambda\mapsto\tilde{u}_{\lambda}\ \mbox{is left continuous from}\ (0,\lambda^*)\ \mbox{into}\ C^1(\overline{\Omega}).
	\end{eqnarray*}
\end{proof}

Summarizing the results of Sections 3 and 4, we can formulate the following bifurcation-type result, describing the behavior of the set of positive solutions of \eqref{eqp} with respect to the parameter $\lambda>0$.
\begin{theorem}\label{th13}
	If hypotheses $H(\xi),H(\beta),H(g),H(f)$ and $H_0$ hold, then there exists $\lambda^*>0$ such that
	\begin{itemize}
		\item[(a)] for every $\lambda\in(0,\lambda^*)$ problem \eqref{eqp} has at least two positive solutions
		$$u_{\lambda},\hat{u}_{\lambda}\in D_+\ \mbox{with}\ \hat{u}_{\lambda}-u_{\lambda}\in D_+,$$
		it has a smallest positive solution $\tilde{u}_{\lambda}\in D_+$ and the map $\lambda\mapsto\tilde{u}_{\lambda}$ from $(0,\lambda^*)$ into $C^1(\overline{\Omega})$ is strictly increasing in the sense that
			$$\tau<\lambda\Rightarrow\tilde{u}_{\lambda}-\tilde{u}_{\tau}\in D_+$$
			and is left continuous;
		\item[(b)] for $\lambda=\lambda^*$ problem \eqref{eqp} has at least one positive solution $u^*\in D_+$;
		\item[(c)] for $\lambda>\lambda^*$ problem \eqref{eqp} has no positive solutions.
	\end{itemize}
\end{theorem}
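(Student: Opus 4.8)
The plan is to set $\lambda^*=\sup\mathcal{L}$ and then simply read off the three assertions from the results already established in Sections 3 and 4; no new argument is needed beyond assembling them coherently. First I would record the structure of $\mathcal{L}$: by Proposition \ref{prop7} we have $\mathcal{L}\neq\emptyset$ and, whenever $\lambda\in\mathcal{L}$, also $(0,\lambda]\subseteq\mathcal{L}$, so $\mathcal{L}$ is an interval with left endpoint $0$; by Proposition \ref{prop9}, $\lambda^*<\infty$; and by Proposition \ref{prop11}, $\lambda^*\in\mathcal{L}$. Putting these together yields $\mathcal{L}=(0,\lambda^*]$, which is the backbone of the theorem.

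For part (a), fix $\lambda\in(0,\lambda^*)$. The existence of two positive solutions $u_\lambda,\hat u_\lambda\in D_+$ with $\hat u_\lambda-u_\lambda\in D_+$ is exactly the content of Proposition \ref{prop10}. The existence of a smallest positive solution $\tilde u_\lambda\in D_+$, the strict monotonicity $\tau<\lambda\Rightarrow\tilde u_\lambda-\tilde u_\tau\in D_+$, and the left continuity of $\lambda\mapsto\tilde u_\lambda$ from $(0,\lambda^*)$ into $C^1(\overline\Omega)$ are furnished verbatim by Proposition \ref{prop12}. One should phrase ``at least two positive solutions'' so as not to clash with the possibility, already noted inside the proof of Proposition \ref{prop10}, that $K_{\psi^*_\lambda}$ is infinite, in which case there are infinitely many positive solutions strictly above $u_\lambda$ and the conclusion holds a fortiori.

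For part (b), since $\lambda^*\in\mathcal{L}$ by Proposition \ref{prop11}, problem \eqref{eqp} admits a positive solution $u^*$ for $\lambda=\lambda^*$, and every element of $S^{\lambda^*}_+$ lies in $D_+$ by the last assertion of Proposition \ref{prop7}. For part (c), if $\lambda>\lambda^*=\sup\mathcal{L}$, then $\lambda\notin\mathcal{L}$, i.e. \eqref{eqp} has no positive solution.

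I do not expect any genuine obstacle here: the theorem is a synthesis of Propositions \ref{prop7}, \ref{prop9}, \ref{prop10}, \ref{prop11} and \ref{prop12}, and the only care required is to keep the notation ($\mathcal{L}$, $\lambda^*$, $S^\lambda_+$, $D_+$) consistent with those statements and to invoke $\mathcal{L}=(0,\lambda^*]$ at the point where parts (b) and (c) are deduced.
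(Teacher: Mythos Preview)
Your proposal is correct and matches the paper's approach exactly: the paper presents Theorem \ref{th13} as a direct summary of Propositions \ref{prop7}, \ref{prop9}, \ref{prop10}, \ref{prop11} and \ref{prop12}, with no additional argument beyond assembling them. Your identification of which proposition supplies each item (and the observation that $\mathcal{L}=(0,\lambda^*]$ underpins parts (b) and (c)) is precisely the intended synthesis.
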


\section*{Acknowledgments} This research  was supported  by  the  Slovenian  Research  Agency
grants P1-0292, J1-7025, and J1-6721, and the Romanian National
Authority for Scientific Research and Innovation, CNCS-UEFISCDI, project  PN-III-P4-ID-PCE-2016-0130.
We thank the referee for comments.


\medskip 

\end{document}